\numberwithin{equation}{section}
\def\sO{{\mathscr O}}
\def\sM{{\mathscr M}}
\def\sL{{\mathscr L}}
\def\sI{{\mathscr I}}
\def\sO{\mathscr{O}}
\def\sI{\mathscr{I}}
\def\sV{\mathscr{V}}
\newcommand{\CC}{\mathbb{C}}
\newcommand{\PP}{\mathbb{P}}
\newcommand{\ZZ}{\mathbb{Z}}
\newcommand{\bk}{\mathbf{k}}
\newcommand{\kk}{\bk}
\newcommand{\cal}{\mathcal}
\def\cE{{\cal E}}
\def\cU{{\cal U}}
\def\cV{{\cal V}}
\def\cS{{\cal S}}
\def\cX{{\cal X}}
\def\cY{{\cal Y}}
\def\mapright#1{\,\smash{\mathop{\lra}\limits^{#1}}\,}
\def\sta{^\ast}
\def\upmo{^{-1}}
\def\sta{^{\ast}}
\def\sta{^*}
\def\lra{\longrightarrow}
\newcommand{\bfmS}{{\bf mS}}
\def\begeq{\begin{equation}}
\def\endeq{\end{equation}}
\def\and{\quad{\rm and}\quad}
\def\bl{\bigl(}
\def\br{\bigr)}
\def\sub{\subset}
\def\Ao{{\mathbb A}^{\!1}}
\def\and{\quad\text{and}\quad}
\def\mapright#1{\,\smash{\mathop{\lra}\limits^{#1}}\,}
 \DeclareMathOperator{\Ext}{Ext}
  \DeclareMathOperator{\Hom}{Hom}
\let\lab=\label
\newtheorem{prop}{Proposition}[section]
\newtheorem{lemm}[prop]{Lemma}
\newtheorem{rema}[prop]{Remark}
\newtheorem{defi}[prop]{Definition}
\theoremstyle{definition}
\newtheorem{say}[prop]{}
\def\bone{{\mathbf 1}}
\def\Po{{\mathbb P^1}}
\def\Pf{{\mathbb P}^4}
\def\Pn{{\mathbb P}^n}
\def\PP{{\mathbb P}}
\def\MPd{\overline M_1(\Pn,d)}
\def\MPdg{\overline{M}_g(\Pn,d)}
\def\wMPdg{\widetilde{M}_g(\PP,d)}
\def\sta{^\ast}
\let\lab=\label
\def\sO{{\mathscr O}}
\def\lab#1{\label{#1}[{#1}]\  }
\def\lab{\label} 
\def\beq{\begin{equation}}
\def\eeq{\end{equation}}
\title[Genus one stable maps]
{An Invitation to the Local Structures of Moduli of Genus one
stable maps}
\author{Yi Hu}
\address{Department of Mathematics, University of Arizona, USA.}
\email{yhu@math.arizona.edu}
\thanks{Math. Subject Classification: 14Dxx}
\begin{document}
\maketitle

\begin{abstract}
This informal note provides some elementary examples to motivate
the local structural results of \cite{HL08} on the
 moduli space of genus one stable maps to projective space. The
 hope is that these examples will be helpful for graduate students to learn
 this important subject.
\end{abstract}

\section{Introduction}

The moduli space $\overline{M}_g$ of stable curves of genus $g$
has been an important subject of study in algebraic geometry. It
is a smooth Deligne-Mumford stack (orbifold). The moduli space
$\MPdg$ of stable maps of degree $d$ from genus-$g$ curves to the
projective space $\Pn$ is a natural generalization of
$\overline{M}_g$, but this generalization leads us from a smooth
space to a space that can contain singularities as bad as possible
(Vakil's Murphy's Law). For the purposes of some programs, it is
important to obtain the (local) structures of the moduli space
$\MPdg$.

 A point $[u, C]$
of $\MPdg$ is (the isomorphism class of) a map
$$u: C \lra \Pn$$
where $C$ is an algebraic curve of arithmetic genus $g$ with at
worse nodal singularities  such that the automorphism group of the
map $[u, C]$ is finite.  Here a nodal singularity locally is given
by $(xy=0)$; the point corresponding to the origin is called a
node. An automorphism of $[u, C]$ means a morphism $\phi: C \lra
C$ such that $u \circ \phi = u.$  The automorphism group of the
map $[u, C]$ is finite if and only if  any genus-0 irreducible
component
 of $C$ contains at least three  nodes whenever it
is contracted by the map and any genus-1 irreducible component of
$C$ contains at least one node whenever it is contracted by the
map.

One may add $m$ marked points to the domain curve $C$ away from
the nodes and obtain the moduli space of  stable maps with $m$
markings, denoted  $\overline{M}_{g,m}(\Pn,d)$. But, as far as
singularity is concerned, $\overline{M}_{g,m}(\Pn,d)$ and $\MPdg$
have exactly the same local singularity types simply because each
marked point moves in a smooth local domain. Hence, as far as
singularity types are concerned, we may only consider moduli
spaces without markings.

When $g=0$, $\overline{M}_{0,m}(\Pn,d)$ is smooth (as a stack or
orbifold); when $g \ge 1$, $\MPdg$ is  singular (as a stack or
orbifold). Indeed, if allowing arbitrary $g$ and $d$, Ravi Vakil
showed that $\MPdg$ can contain all possible singularity types
over $\ZZ$. This seems to be a piece of bad news, but thinking
positively, it also makes the spaces $\MPdg$ ultimately rich as
modular singularity models to investigate.

Historically though, $\MPdg$ was introduced for the (sole) purpose
of defining the Gromov-Witten invariants. An important standing
problem in this area is to enumerate the (virtual) number of
curves with fixed genus and degree in a smooth Calabi-Yau
threefold in $\Pf$, or more generally, in a complete intersection
$X$ in $\Pn$. In principle, the curve-counting business on $X$ can
be done on the ambient space $\Pn$, using the defining equations
of $X$. The moduli space $\overline{M}_g (X,d)$ of stable maps of
 degree $d$ from curves of genus $g$ to $X$ is naturally a
 submoduli space of $\overline{M}_g (\Pn,d)$.
For example, let us assume that $X$ is a smooth hypersurface
defined by $X= s^{-1}(0)$ for some section $s \in \Gamma(\Pn,
\sO_{\Pn}(k))$ with some positive integer $k$. If we let
\begin{equation*}
\begin{CD}
\cX @>{f}>> \Pn \\
@V{\pi}VV \\
\MPdg
\end{CD}
\end{equation*}
be  the universal family over the moduli space $\MPdg$ with the
universal map $f$ and let
$$\sigma = \pi_*f^* s \in \Gamma(\MPdg,\pi_*f^*\sO_{\Pn}(k)),$$ then
we have $$\overline{M}_g (X,d)=\sigma^{-1}(0).$$ If
$\pi_*f^*\sO_{\Pn}(k)$ were locally free, then it would have a
natural Euler class, and this Euler class would  bridge the
intersection theory on $\overline{M}_g (X,d)$ to the intersection
theory on $\MPdg$. Thus, resolving the non-locally free locus of
the direct image sheaf  $\pi_*f^*\sO_{\Pn}(k)$ is essential in GW
theory of complete intersections in $\Pn$. Here, by resolving the
sheaf, we mean a diagram
\begin{equation*}
\begin{CD}
{\tilde f}: \widetilde{\cX} @>>> \cX @>{f}>> \Pn \\
@V{\tilde\pi}VV @V{\pi}VV \\
\wMPdg @>>> \MPdg,
\end{CD}
\end{equation*}
where $\wMPdg \lra \MPdg$ is a blowup and $\widetilde{\cX}= \cX
\times_{\MPdg} \wMPdg $
 such that the direct image sheaf $\tilde\pi_*{\tilde f}^*\sO_{\Pn}(k))$ is locally free.

Thus, one sees that on the one hand, it is important for GW theory
to resolve the sheaf $\pi_*f^*\sO_{\Pn}(k)$; on the other hand, it
is important for singularity theory to resolve $\MPdg$. The two
problems are related. The point of \cite{HL08} is that it is more
natural to resolve the sheaves $\pi_*f^*\sO_{\Pn}(k)$ first, and
at least when the genus is low, resolving the sheaves will also
ensure a resolution  of the moduli space.

To see this point, observe that  a stable map $[u]\in \MPdg$, as a
morphism, is given by the data
$$u=[u_0,\cdots,u_n]: C\lra\Pn,\quad u_i\in H^0(u\sta \sO_{\Pn}(1));
$$
its deformation is determined by the combined deformation of the
 curve $C$ and the sections $\{u_i\}$. Since the
deformation of the curve is unobstructed, the irregularity of
$\MPdg$ is closely related to the non-local-freeness of the direct
image sheaf $\pi_* f^* \sO_{\PP}(1)$   This alludes that
desingularizations of $\MPdg$ {\sl should be} governed by
desingularizations  of $\pi_* f^* \sO_{\PP}(1)$. For genus-1, this
is true in the simplest form, a desingularization of $\pi_* f^*
\sO_{\PP}(1)$ implies a desingularization of $\MPd$.

Vakil and Zinger first discovered a  desingularization of the main
component of the moduli space $\MPd$ in \cite{VZ}. Their method is
analytic in nature. They found a natural sequence of blowups that
resolve singularities of $\MPd$ and then showed that the same
blowups also resolve the sheaves $\pi_* f^* \sO_{\PP}(k)$.  As
hinted in the last paragraph, from the algebro-geometric approach,
it is more natural to resolve the sheaves first. In \cite{HL08},
we first obtain local structures of the sheaf $\pi_* f^*
\sO_{\PP}(k)$. The structures of $\pi_* f^* \sO_{\PP}(1)$ allow us
to derive local defining equations of the moduli space $\MPd$.
Having obtained these local equations, it is rather clear what
loci one should blow up, how the resulting space turns out to be
smooth, and why the resulting direct image sheaves become locally
free.

This note, through some  concrete examples, is solely devoted to
reveal the structures of the sheaf $\pi_* f^* \sO_{\Pn}(k)$. By
working out these examples, we hope the student will familiarize
himself/herself with the aspects of families of elliptic curves
that are  useful for obtaining the local structures of the moduli
space of  stable maps.

\medskip
During the summer school, I gave four lectures on tropical curves
and their applications to plane enumerative geometry. When the
school was over, the organizers asked every speaker to write up
his lecture notes for the proceeding. However, as there have been
already several excellent expository articles on tropical curves
that the students can easily find on arXiv and I did not feel that
I could make any meaningful improvement,  instead, I thought that
an introductory note on elliptic stable maps should help students
to learn this important subject. The idea is that through some
examples the students will gain the intuition about the approach
to the local structures of the stable map muduli \cite{HL08}. I
hope that the material will be a more converging addition to the
proceeding, and it will be more useful for graduate students as
well as for researchers.  Most of the note should be accessible to
any graduate student with some backgrounds on algebraic geometry.

{\it This note recollects the toy examples that Jun Li and I
calculated in the summer of 07 as the warm-up as well as the guide
for our approach toward the general theory \cite{HL08}, but,
needless to say,  all the mistakes in this detailed presentation
must be due to my own oversight.}  I thank Jun Li, from whom I
have learned a lot, for the collaboration. I also thank CMS of
Zhejiang University and the organizers of the summer school,
especially Lizhen Ji, for the excellent environment and for
supporting the idea to include this note in the proceeding.

\tableofcontents

\section{The Structures of the Direct Image Sheaf}

We begin with motivating the setups of our examples.

\subsection{Motivation: reduction to local family}
\begin{say} Let  $\pi: \cX \lra \MPd$ be the universal family with
the universal map $f: \cX \lra \Pn$. As for any sheaf, the
question on the structures of the direct image sheaf $\pi_*f^*
\sO_{\Pn}(k)$ is (\'etale) local. Given any point
$$[u,C]=[u: C
\lra \Pn] \in \MPd,$$ we choose a small \'etale neighborhood $\cU
\ni [u,C]
 \subset \MPd$. By choosing $k$ general hyperplanes $H_1, \cdots, H_k$ of $\Pn$, we
can assume that $$f^*(H_1+\cdots+H_k) \cap C$$ is a simple divisor
$\sum_{i=1}^m s_i$ of degree $m=dk$ ($s_1, \cdots, s_m$ are
disjoint). Let $\cS=f^*(H_1+\cdots+H_k)$, then $$f^* \sO_{\Pn}
(k)= \sO_\cX (\cS).$$ By an \'etale base change, we may assume
that $\cS=\sum_{i=1}^m \cS_i$ where each $\cS_i$ is a section of
$\pi:\cX\to \cV$ such that $s_i = \cS_i \cap C$. Hence, the local
structures of $\pi_*f^* \sO_{\Pn}(k)$ is reflected in the
structure of $\pi_* \sO_\cX (\cS)$.
\end{say}

\begin{say} This motivates us to consider some examples of flat
families of elliptic curves, $\pi: Z \lra B$ with a section $S$,
and study the associated direct image sheaf $\pi_* \sO_Z(mS)$.
\end{say}

\begin{defi} The core $C_e$ of a connected genus-one curve $C$ is the unique
smallest (by inclusion) subcurve of arithmetic genus one.
\end{defi}

\vskip 4cm

\begin{picture}(3, 5)
\put(90,5){ \psfig{figure=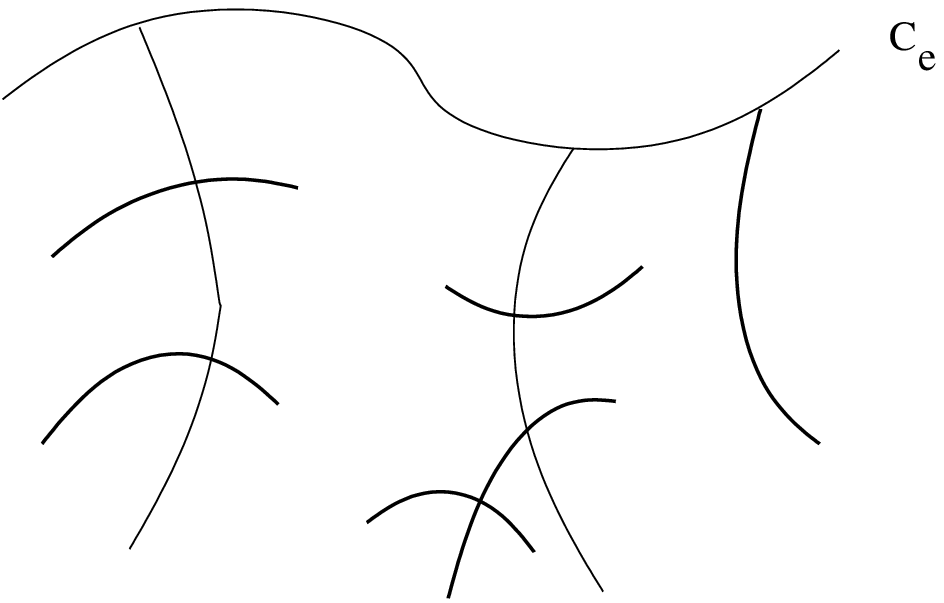, height = 3.5cm,width =
5cm} }
\end{picture}

\vskip .5cm

\centerline{Figure 1.  An elliptic curve $C_e$ attached with 3
rational tails}

\begin{say}
Given any connected genus-one curve $C$, upon removing the core of
$C$, the rest of irreducible components are all rational curves
(i.e., genus zero curves), we denote their union by $C'$. We will
call each connected component of $C'$ a tail, it is a tree of
rational curves. If there are $r$ such connected components, we
will say the curve $C$ has $r$ (rational) tails. See Figure 1 for
an example of elliptic curve with three rational tails.
\end{say}

\begin{say}
By Riemann-Roch, we can check that $R^1\pi_* \sO_Z(mS)=0$ at point
$b \in B$ where the section $S$ meets the core of $Z_b$ but
$R^1\pi_* \sO_Z(mS)$ does not vanish otherwise. In particular,
 $\pi_* \sO_Z(mS)$ is locally
free at point $b \in B$ where the section meets the core of $Z_b$,
 but it is not locally free otherwise.
\end{say}

\begin{say} So we will study examples of families $\pi: Z \lra B$ together with a section $S$
having a point $0 \in B$ such that
\begin{itemize}
\item for $b \ne 0$, the fiber $Z_b$ is smooth; \item the fiber
$Z_0$ is an elliptic curve with $r$ tails; \item  $S$ misses the
core of $Z_0$ but meets the tails.
\end{itemize}
\end{say}

\vskip 9cm

\begin{picture}(3, 5)
\put(90,5){ \psfig{figure=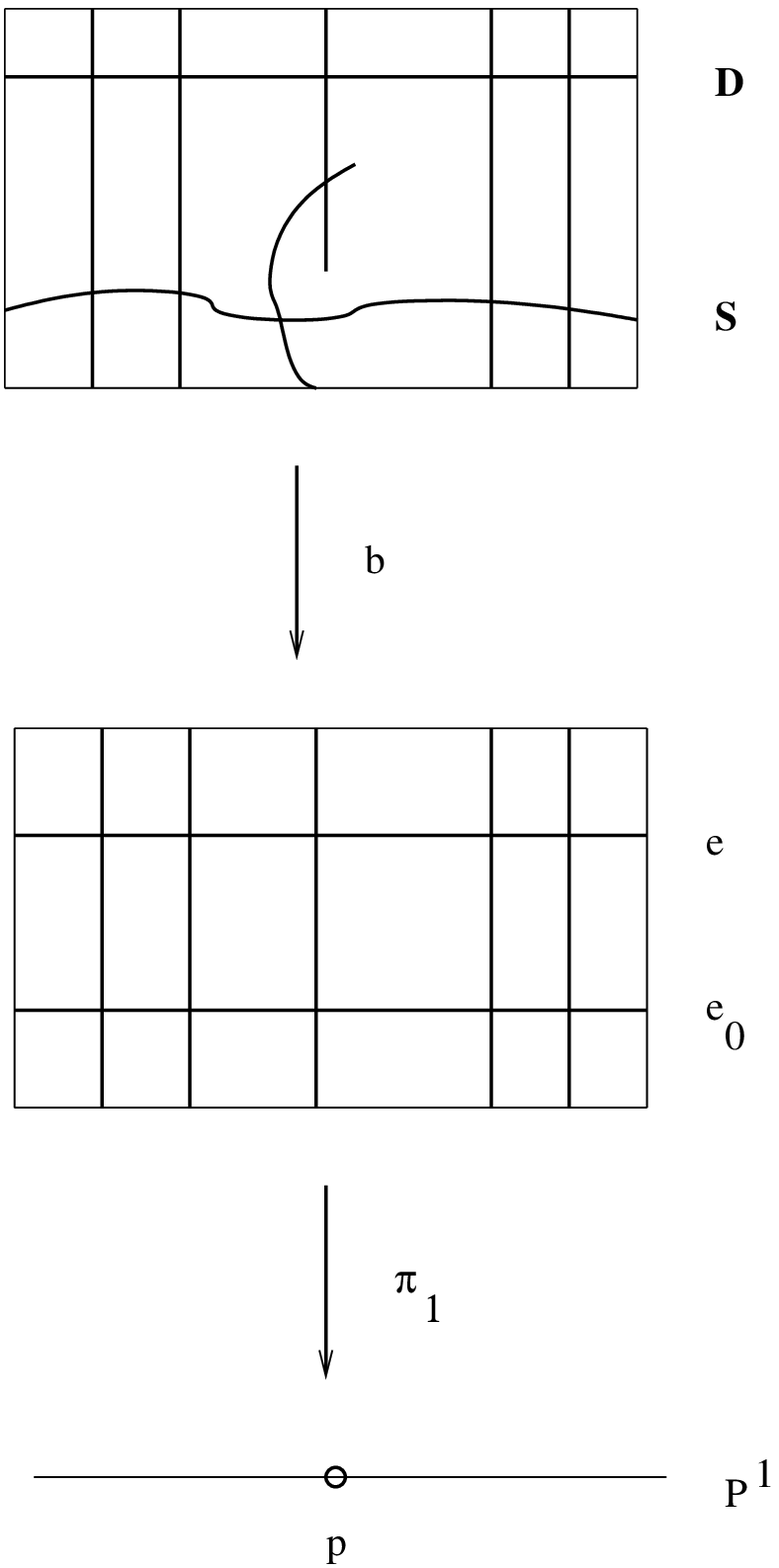, height = 8.5cm,width = 5cm}
}
\end{picture}

\vskip .5cm

\centerline{Figure 2.  The family $Z \lra {\mathbb P}^1 $}

\bigskip\bigskip

\begin{say}
 We will begin with three
toy examples in order of generality: a warm-up 1-tail case, an $
r$-tail case ($r> 1$), and a case of more general type. Throughout
the note, unless otherwise stated, we can work over any fixed
algebraically closed base field $\kk$.
\end{say}

\subsection{A 1-tail case}
\label{1-tail}

\begin{say} We now begin to construct a one-parameter family of
genus-1 curves whose central fiber is a smooth  elliptic curve
attached with a smooth rational curve. One may consult Figure 2
for picture of such a construction. The details are in the next
two paragraphs.
\end{say}

\begin{say}
Let $b: Z \lra {\mathbb P}^1 \times E$ be the blow up of $
{\mathbb P}^1 \times E$ at $(p,e_0)$ where $p$ a fixed point on
${\mathbb P}^1$ and $e_0$ is a fixed point on $E$, respectively.
Let
$$\pi_1: {\mathbb P}^1 \times E \lra {\mathbb P}^1$$ be the
projection to the first factor and $$\pi= \pi_1 \circ b:  Z \lra
{\mathbb P}^1 \times E \lra {\mathbb P}^1.$$ Note that this
provides a one-parameter smoothing of an elliptic curve with one
rational tail. We will denote the fiber $\pi^{-1}(t)$ by $Z_t$, $t
\in {\mathbb P}^1$.
\end{say}

\begin{say}
Choose a generic point $e \in E$. Let $S, D$ be the proper
transform of ${\mathbb P}^1 \times e_0$ and ${\mathbb P}^1 \times
e$, respectively. Note that if we write the central fiber of $\pi$
as $C_o + C_a$ where $C_o$ is elliptic and $C_a$ is rational, then
$\mathscr O_Z (D) = \mathscr O_Z (S + C_a)$.  We will consider the
direct image sheaf $\mathscr L_m = \pi_*\mathscr O_Z(mS)$.
\end{say}

\begin{say}
We have a short exact sequence
$$0 \lra  \mathscr O_Z(mS) \lra  \mathscr O_Z(mS+D)
\lra   \mathscr O_Z(mS+D)|_{D} \lra 0$$ and  a long exact sequence
\begin{equation}
\begin{CD}
\label{longSD} 0 \lra \pi_* \mathscr O_Z(mS) @>{\alpha_m}>> \pi_*
\mathscr O_Z(mS+D) @>{\beta_m}>> \\  \pi_*   \mathscr
O_Z(mS+D)|_{D} @>{\gamma_m}>> R^1\pi_* \mathscr O_Z(mS)\lra 0.
\end{CD}
\end{equation}
Here observe that $\pi_*  \mathscr O_Z(mS+D)$ is locally free
because one  calculates that $$\dim H^0(Z_t, \mathscr
O_Z(mS+D)|_{Z_t}) = m+1, \quad \hbox{ for all} \; t \in {\mathbb
P}^1$$ and $$R\pi_*^1 \mathscr O_Z(mS+D)=0.$$  Since
$$\sO(mS+D)|_D=\sO(D)|_D=N_{D\backslash Z}\cong \sO_D$$ and
$\pi|_D:D\to \Po$ is an isomorphism,   we see that $\pi_* \mathscr
O_Z(mS+D)|_{D}= \mathscr O_{{\mathbb P}^1}.$
\end{say}

\begin{say}
Our goal is to describe explicitly the entire sequence
(\ref{longSD}). \end{say}

\begin{say}
The case $m=0$ is somewhat special, we isolate it below.
$$
0 \lra \pi_* \mathscr O_Z \lra \pi_*  \mathscr O_Z(D) \lra \pi_*
\mathscr O_Z(D)|_{D} \lra R^1\pi_* \mathscr O_Z \lra 0.
$$
It is easy to see that this is
\begin{equation}
\begin{CD}
\label{m=0} 0 \lra \mathscr O_{{\mathbb P}^1} @>{\cong}>> \mathscr
O_{{\mathbb P}^1} @>{0}>>  \mathscr O_{{\mathbb P}^1} @>{\cong}>>
 \mathscr O_{{\mathbb P}^1}  \lra 0.
\end{CD}
\end{equation}
\end{say}


\begin{say}
Toward the general case of (\ref{longSD}),
we first  consider another  short exact sequence:
$$0 \lra  \mathscr O_Z(mS+D) \lra  \mathscr O_Z((m+1)S+D)
\lra   \mathscr O_Z((m+1)S +D)|_S \lra 0.$$ Noting that
$$\sO_Z((m+1)S+D)|_S=N_{S\backslash Z}^{\otimes m+1}=\sO_S(-m-1)$$
and $$R^1 \pi_* \mathscr O_Z(mS+D)=0,$$ we obtain a short  exact
sequence of locally free sheaves
\begin{equation}\label{DD}
0 \lra \pi_* \mathscr O_Z(mS+D) \lra \pi_*  \mathscr O_Z((m+1)S+D)
\lra \sO_{{\mathbb P}^1} (-m-1) \lra 0.
\end{equation}
\end{say}

We claim

\begin{lemm}\label{splitDD} The sequence (\ref{DD}) splits
and consequently
$$\pi_*\mathscr O_Z(mS+D) = \bigoplus_{i=0}^m \sO_{{\mathbb P}^1} (-i).$$
\end{lemm}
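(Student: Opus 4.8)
The plan is to prove the splitting of (\ref{DD}) by induction on $m$, using the sequence (\ref{DD}) itself as the inductive engine, together with Grothendieck's theorem that every vector bundle on $\Po$ splits into line bundles and the standard vanishing $H^1(\Po,\sO_\Po(d))=0$ for $d\ge -1$. The point is that, once we know the left-hand term of (\ref{DD}) has the asserted splitting type, the extension class of (\ref{DD}) lives in an $\Ext$ group that vanishes for degree reasons, so the sequence splits and the splitting type propagates to $\pi_*\sO_Z((m+1)S+D)$.

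First I would settle the base case $m=0$. The sequence (\ref{m=0}) already exhibits $\pi_*\sO_Z(D)\cong\sO_\Po$, which is exactly $\bigoplus_{i=0}^{0}\sO_\Po(-i)$, so the induction is anchored.

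For the inductive step I would assume $\pi_*\sO_Z(mS+D)=\bigoplus_{i=0}^m\sO_\Po(-i)$. The sequence (\ref{DD}) then presents $\pi_*\sO_Z((m+1)S+D)$ as an extension of $\sO_\Po(-m-1)$ by $\bigoplus_{i=0}^m\sO_\Po(-i)$, and such extensions are classified by
$$\Ext^1_{\Po}\bl\sO_\Po(-m-1),\,\bigoplus_{i=0}^m\sO_\Po(-i)\br=\bigoplus_{i=0}^m H^1\bl\Po,\sO_\Po(m+1-i)\br.$$
Since $m+1-i\ge 1\ge -1$ for every $0\le i\le m$, each summand on the right vanishes, so the whole $\Ext$ group is zero; hence (\ref{DD}) splits. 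Splitting immediately yields
$$\pi_*\sO_Z((m+1)S+D)=\Bigl(\bigoplus_{i=0}^m\sO_\Po(-i)\Bigr)\oplus\sO_\Po(-m-1)=\bigoplus_{i=0}^{m+1}\sO_\Po(-i),$$
which closes the induction and proves both assertions of the lemma.

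The main obstacle, modest as it is, is to make sure the induction hypothesis is genuinely what makes the $\Ext$ group vanish: the splitting of (\ref{DD}) is not automatic for a general extension of bundles on $\Po$, but becomes so precisely because every line-bundle summand of the sub-sheaf $\pi_*\sO_Z(mS+D)$ has degree $\ge -m$, so that after the relevant twist by $\sO_\Po(m+1)$ every summand has degree $\ge 1$ and contributes no $H^1$. I would also emphasize at the outset that both $\pi_*\sO_Z(mS+D)$ and $\pi_*\sO_Z((m+1)S+D)$ are locally free — this was established before the lemma from the fibrewise count $h^0=m+1$ and the vanishing of the relevant $R^1\pi_*$ — since it is exactly this local freeness that lets me treat the terms of (\ref{DD}) as honest vector bundles on $\Po$ to which Grothendieck's theorem and the $\Ext$-classification of extensions apply. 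As a consistency check one can note that (\ref{DD}) forces $\deg\pi_*\sO_Z(mS+D)=-m(m+1)/2$, matching $\sum_{i=0}^m(-i)$.
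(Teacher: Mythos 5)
Your proof is correct and follows essentially the same route as the paper: induction on $m$ anchored at $\pi_*\sO_Z(D)\cong\sO_{\Po}$, with the inductive step given by the vanishing of $\Ext^1_{\Po}\bl\sO_{\Po}(-m-1),\bigoplus_{i=0}^m\sO_{\Po}(-i)\br$, which forces (\ref{DD}) to split. The only cosmetic difference is that the paper cites Serre duality for this vanishing while you compute it directly as $\bigoplus_i H^1(\Po,\sO_{\Po}(m+1-i))=0$; these are the same calculation.
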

\begin{proof}
We  prove it by induction. For $m=0$, it is clear that
$$\pi_* \sO_Z(D) = \sO_{\Po}.$$

Assume that the lemma is true for the case of $m$. Then by
(\ref{DD}), we have
$$0 \lra  \bigoplus_{i=0}^m \sO_{{\mathbb P}^1} (-i) \lra
\pi_*  \mathscr O_Z((m+1)S+D) \lra \sO_{{\mathbb P}^1} (-m-1) \lra
0.$$

Using Serre duality, one calculates that
$$\Ext^1_\Po\bl\sO_{{\mathbb P}^1} (-m-1),
\bigoplus_{i=0}^m \sO_{{\mathbb P}^1} (-i)\br=0,
$$
hence the exact sequence above must be trivial.
\end{proof}

\begin{say}
Let $[t, s]$ be the homogeneous coordinates of  ${\mathbb P}^1$. We may assume that $p=[0,1]$.
Then we have a canonical  map $\beta$,
 \begin{equation}\label{beta}
\begin{CD}
\sO_{{\mathbb P}^1} (-1) @>{\times t}>> \sO_{{\mathbb P}^1} \\
\end{CD}
\end{equation}
which induces a canonical exact sequence
 \begin{equation}\label{CoreSequence}
\begin{CD}
0 @>>> \sO_{{\mathbb P}^1} (-1) @>{\times t}>> \sO_{{\mathbb P}^1} @>{\gamma}>> {\bf k}(p) @>>> 0\\
\end{CD}
\end{equation}
where $\kk(p)$ is the one-dimensional skyscraper sheaf supported
at $p$ and $\gamma$ is the evaluation at $p$. We make an
observation here that any map $\beta': \sO_{{\mathbb P}^1} (-1)
\lra  \sO_{{\mathbb P}^1}$ such that $\gamma \circ \beta' =0$ is a
scalar multiple of $\beta$. This is because $\dim \Hom
(\sO_{{\mathbb P}^1} (-1), \sO_{{\mathbb P}^1}) = 2$ and $\Hom
(\sO_{{\mathbb P}^1} (-1), \sO_{{\mathbb P}^1})$ has a basis $\{
\times t, \times s \}$. In particular, any such nontrivial
$\beta'$ determine the same cokernel, namely, $\kk(p)$.
\end{say}

\begin{prop}
\label{prop1tail}
Assume $m > 0$. Up to isomorphism, we have
\begin{enumerate}
\item  $\pi_* \mathscr O_Z(mS) \cong \bigoplus_{1 \ne i=0}^m
\sO_{{\mathbb P}^1} (-i).$ \item  The map $\alpha_m:\pi_* \mathscr
O_Z(mS) \lra
 \pi_*  \mathscr O_Z(mS+D) \cong \bigoplus_{i=0}^m \sO_{{\mathbb P}^1} (-i) $ is the natural inclusion to the corresponding factors.
\item  The map $\beta_m$ is given by $\beta: \sO_{{\mathbb P}^1}
(-1) \lra \sO_{{\mathbb P}^1}$. \item  The map $\gamma_m$ is the
evaluation at $p$.
 In particular,  all $R^1\pi_* \mathscr O_Z(mS)$ are isomorphic to the one-dimensional skyscraper sheaf $k(p)$ supported at $p$.
\end{enumerate}
\end{prop}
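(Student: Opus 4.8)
The plan is to reduce all four assertions to a single explicit identification of the map $\beta_m$ in the long exact sequence (\ref{longSD}), since exactness gives $\pi_*\sO_Z(mS)=\ker\beta_m$ with $\alpha_m$ its inclusion, and $R^1\pi_*\sO_Z(mS)=\coker\beta_m$ with $\gamma_m$ the quotient map. Using Lemma \ref{splitDD} to write the source as $\bigoplus_{i=0}^m\sO_{\Po}(-i)$ and the computation $\pi_*(\sO_Z(mS+D)|_D)=\sO_{\Po}$, the map $\beta_m$ is a tuple $(f_0,\dots,f_m)$ with $f_i\in\Hom(\sO_{\Po}(-i),\sO_{\Po})=H^0(\sO_{\Po}(i))$ a form of degree $i$. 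I would first extract the coarse shape of this tuple from the Riemann--Roch observation recorded above: because $S$ meets the core of $Z_t$ exactly when $Z_t$ is smooth, i.e. when $t\ne p$, the cokernel $\coker\beta_m=R^1\pi_*\sO_Z(mS)$ is supported precisely at $p$ and is nonzero there. Vanishing of the cokernel away from $p$ says the $f_i$ have no common zero on $\Po\setminus\{p\}$; non-vanishing at $p$ says every $f_i$ vanishes at $p$. In particular the constant $f_0$ is $0$, and since $p=[0,1]$ each $f_i$ ($i\ge 1$) is divisible by $t$, say $f_i=t\,h_i$ with $\deg h_i=i-1$.

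The crux is to show that $\coker\beta_m$ has length exactly one, i.e. equals $\kk(p)$ rather than some longer skyscraper $\sO_{\Po}/\fm_p^k$; a fiberwise dimension count cannot detect this, since $(\sO_{\Po}/\fm_p^k)\otimes\kk(p)$ is one-dimensional for every $k$. For this I would induct on $m$ along the filtration $\sO_Z(jS+D)$, $0\le j\le m$. Because $S\cap D=\varnothing$, restriction to $D$ is compatible with the inclusions $\sO_Z((m-1)S+D)\hookrightarrow\sO_Z(mS+D)$, giving a commutative square whose bottom arrow $\pi_*(\sO_Z((m-1)S+D)|_D)\to\pi_*(\sO_Z(mS+D)|_D)$ is an isomorphism $\sO_{\Po}\xrightarrow{\sim}\sO_{\Po}$ (multiplication by a nonzero constant); hence $\beta_m$ restricted to the subsheaf $\pi_*\sO_Z((m-1)S+D)$ agrees with $\beta_{m-1}$ up to this isomorphism, so $\image\beta_{m-1}\subseteq\image\beta_m$. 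For $m=1$ we have $\beta_1=(0,f_1)$ with $f_1$ a nonzero linear form through $p$, forcing $f_1=ct$ ($c\ne 0$) and $\image\beta_1=(t)=\sI_p$, whence $\coker\beta_1=\kk(p)$. For the step, $\image\beta_m\supseteq\image\beta_{m-1}=\sI_p$ by induction, while $\image\beta_m\subseteq\sI_p$ because every $f_i$ vanishes at $p$; therefore $\image\beta_m=\sI_p$ and $\coker\beta_m=\sO_{\Po}/\sI_p=\kk(p)$, which is the last clause of (4).

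Finally I would put $\beta_m$ into normal form to read off (1)--(3). Writing $f_1=ct$ with $c\ne 0$ and $f_i=t\,h_i$ with $h_i$ of degree $i-1$, the unipotent automorphism of $\bigoplus_{i=0}^m\sO_{\Po}(-i)$ defined by $e_i\mapsto e_i-(h_i/c)\,e_1$ for $i\ge 2$ (fixing $e_0,e_1$) precomposes $\beta_m$ to send the $i$-th component to $f_i-(h_i/c)f_1=t\,h_i-(h_i/c)(ct)=0$; a rescaling of $e_1$ then turns $\beta_m$ into the map that is $\beta\colon\sO_{\Po}(-1)\xrightarrow{\times t}\sO_{\Po}$ of (\ref{beta}) on the $\sO_{\Po}(-1)$ factor and zero on all others, which is (3). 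Since $\times t$ is injective, $\ker\beta_m=\bigoplus_{i\ne 1}\sO_{\Po}(-i)$, yielding (1) and identifying $\alpha_m$ with the inclusion of these factors, which is (2); and by (\ref{CoreSequence}) the induced map $\gamma_m$ on the cokernel is evaluation at $p$, completing (4).

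The main obstacle is exactly the length-one claim of the second paragraph: the geometry (via the core observation) only pins down the support of $R^1$, and it is the compatibility of the restriction-to-$D$ maps through the filtration that upgrades this to the precise value $\kk(p)$. The normal-form step is then a formal consequence of the fact, recorded before the proposition, that $\Hom(\sO_{\Po}(-1),\sO_{\Po})$ is spanned by $\times t$ and $\times s$, so that the only maps killed by evaluation at $p$ are the multiples of $\beta$.
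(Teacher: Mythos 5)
Your strategy is the paper's own -- Lemma \ref{splitDD} to split $\pi_*\mathscr O_Z(mS+D)$, the support of $R^1\pi_*\mathscr O_Z(mS)$ to force $f_0=0$ and $t\mid f_i$, a restriction-to-$D$ comparison along the filtration, and a final re-splitting -- and your treatment of the cokernel, hence of assertion (4), is complete: the sandwich $\sI_p=\image\beta_{m-1}\subseteq\image\beta_m\subseteq\sI_p$ is valid and correctly isolates the length-one subtlety that a fiberwise dimension count cannot see. But there is a genuine gap where you pass to the normal form: the assertion ``$f_1=ct$ with $c\ne 0$'' is never proved, and it does not follow from what precedes it. The equality $\image\beta_m=\sI_p$ is a splitting-independent statement, whereas $c\ne 0$ concerns one component in a chosen splitting, and for $m\ge 3$ the former does not imply the latter. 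Concretely, the tuple $(0,0,ts,t^3)$ on $\sO_{\Po}\oplus\sO_{\Po}(-1)\oplus\sO_{\Po}(-2)\oplus\sO_{\Po}(-3)$ satisfies every constraint you have established -- all components vanish at $p$, they have no common zero elsewhere, and the image is exactly $\sI_p$, so the cokernel is $\kk(p)$ -- yet its $\sO_{\Po}(-1)$-component is zero and its kernel is $\sO_{\Po}\oplus\sO_{\Po}(-1)\oplus\sO_{\Po}(-4)$, which has the same rank and degree as, but is not isomorphic to, $\sO_{\Po}\oplus\sO_{\Po}(-2)\oplus\sO_{\Po}(-3)$. So conclusions (1)--(3) genuinely hinge on excluding $c=0$; moreover your unipotent automorphism $e_i\mapsto e_i-(h_i/c)e_1$ divides by $c$, so it cannot even be written down until this is done.

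The missing step is recoverable from the commutative square you already built in your second paragraph, and it is exactly the paper's device. Choose the splittings of the sheaves $\pi_*\mathscr O_Z(kS+D)$, $1\le k\le m$, compatibly, so that each inclusion $\pi_*\mathscr O_Z(kS+D)\hookrightarrow\pi_*\mathscr O_Z(mS+D)$ is the inclusion of direct factors; this is possible by the same $\Ext^1$-vanishing that proves Lemma \ref{splitDD}. Then reading your square on the $\sO_{\Po}(-1)$ factor, rather than only on images, gives $f_1^{(m)}=\epsilon\, f_1^{(m-1)}$ with $\epsilon$ a nonzero constant, and induction from your $m=1$ base case yields $c\ne 0$. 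This is precisely the paper's step: it takes ``splits of all $\pi_*\mathscr O_Z(kS+D)$ \dots so that all the inclusions \dots are given by the natural factor inclusions'' and then observes from the second square of the diagram that $\beta_m$ restricted to the $\sO_{\Po}(-1)$ factor is nontrivial, hence multiplication by $t$ up to scalar. With that one addition, your argument coincides with the paper's proof.
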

\begin{proof}

To start, observe that for any nonnegative integer $k$, a map from
$\sO_{{\mathbb P}^1}(-k)$ to $ \sO_{{\mathbb P}^1}$ is equivalent to
a map from $\sO_{{\mathbb P}^1}    $ to $ \sO_{{\mathbb P}^1} (k)$,
thus the space of all such maps is $H^0(\Po,\sO_{\Po}(k))$.

Now consider the case $m=1$ first. We have
\begin{equation*}
\begin{CD}
0 \lra \pi_* \mathscr O_Z(S) @>{\alpha_1}>>   \sO_{{\mathbb P}^1}
\oplus     \sO_{{\mathbb P}^1}(-1)
  @>{\beta_1}>>   \sO_{{\mathbb P}^1}  @>{\gamma_1}>>
R^1\pi_* \mathscr O_Z( S)\lra 0 \\
\end{CD}
\end{equation*}
By the observation in the start,   we can express $\beta_1$ as
$$(h_0, h_1) \mapsto c_0 h_0 +   c_1 h_1  $$
where $h_0, h_1 \in  \sO_{{\mathbb P}^1}$ and    $c_i \in H^0(\PP^1, \sO_{\PP^1} (i))$ ($i=0, 1$) are two fixed sections.
Since the cokernel of $\beta_1$ supports at $p$, which  follows from
$$R^1\pi_* \mathscr O_Z( mS)\otimes_{\sO_\Po}\kk(p)=
H^1(\sO_{C_o\cup C_1}(mS))=\kk
$$ by the base change property, hence
$c_0=0$ and $c_1$ is a non-zero constant multiple of $t$.
In particular this also implies that $R^1\pi_* \mathscr O_Z( S)$ is
the one-dimensional skyscraper sheaf $k(p)$ supported at $p$, and
$\gamma_1$ is the evaluation at the point $p$.
This proves the case $m=1$.

For $m \ge 1$, consider the following natural  commutative
diagram of long exact sequence, 
\begin{equation*}
\begin{CD}
0 \lra \pi_* \mathscr O_Z(mS) @>{\alpha_m}>> \pi_* \mathscr
O_Z(mS+D) \\
@VVV @VVV \\ 
0 \lra \pi_* \mathscr O_Z((m+1)S) @>{\alpha_{m+1}}>> \pi_*
\mathscr O_Z((m+1)S+D)
\end{CD}
\end{equation*}
\begin{equation*}\begin{CD} @>{\beta_m}>>\sO_{{\mathbb P}^1}@>{\gamma_m}>>
R^1\pi_* \mathscr O_Z(mS)\lra 0 \\& &  @VVV @ VVV  \\
@>{\beta_{m+1}}>> \sO_{{\mathbb P}^1} @>{\gamma_{m+1}}>> R^1\pi_*
\mathscr O_Z((m+1)S)\lra 0 \end{CD}
\end{equation*}
 Note that the third downward arrow is an isomorphism. The
diagram gives rise to the following one which we will use soon
\begin{equation*}
\begin{CD}
0 \lra \pi_* \mathscr O_Z(S) @>{\alpha_1}>> \pi_*  \mathscr
O_Z(S+D) \\
 @VVV @VVV  \\
0 \lra \pi_* \mathscr O_Z(m S) @>{\alpha_m}>> \pi_*  \mathscr
O_Z(m S+D)
\end{CD}
\end{equation*}
\begin{equation*}
\begin{CD}
@>{\beta_1}>>   \sO_{{\mathbb P}^1}  @>{\gamma_1}>> R^1\pi_*
\mathscr O_Z(S)\lra 0 \\
&& @VVV @ VVV  \\
@>{\beta_m}>>  \sO_{{\mathbb P}^1}   @>{\gamma_m}>> R^1\pi_*
\mathscr O_Z(mS)\lra 0
\end{CD}
\end{equation*}
 We take splits of all $\pi_*  \mathscr O_Z(kS+D)$ $1 \le k \le
m$ so that all the inclusions $\pi_*  \mathscr O_Z(kS+D)
\hookrightarrow  \mathscr O_Z(mS+D)$ are given by the natural
factor inclusions. By the second square of the last diagram, we
see that the $\beta_m$ restricted to the factor $\sO_{{\mathbb
P}^1} (-1)$ is nontrivial. So, up to a non-zero scalar, we can
assume that it is given by multiplication by $t$.

As in the case of $m=1$, we can  express $\beta_m$ as
$$(h_0, {h_1}, {h_2} \cdots, {h_m})
 \mapsto c_0 h_0 + t  h_1 + c_2 h_2 +
\cdots + c_m h_m$$ for some fixed  $ c_i \in H^0(\Po, \sO_{\Po}(i))$, where
$ h_0, \cdots,  h_m  \in \sO_{\Po}$. Using
the base change property, we see that $R^1\pi_* \mathscr
O_Z(mS)$ is supported at $p$, thus $c_0=0$ and $t \mid c_i$ for $i \ge 2$.
So, we can write $c_i = t a_i$ with $a_i \in H^0(\Po, \sO_{\Po}(i-1)) (i \ge 2)$.

 Hence we obtain that $\ker \beta_m$ is given by
$$\{(h_0, h_1,  h_2,  \cdots, h_m) | -h_1 =   a_2 h_2 +
\cdots + a_m  h_m, h_0, h_2, \cdots, h_m \in   \sO_{{\mathbb P}^1}
\}. $$ This clearly is isomorphic to $\bigoplus_{1\ne i=0}^m
\sO_{{\mathbb P}^1} (-i)$. Therefore, by expressing an arbitrary
element $(h_0, h_1,  h_2,  \cdots, h_m)$ as
$$ (h_0, - (a_2 h_2 +
\cdots + a_m h_m),  h_2,  \cdots, h_{m+1})$$ $$ + (0, h_1 + (a_2
h_2 + \cdots + a_m h_m), 0, \cdots, 0),$$
 we conclude that after a (possibly) new split of
 $$\mathscr O_Z(mS+D) = \bigoplus_{i=0}^m \sO_{{\mathbb P}^1}
 (-i),$$
we can identify
 $\ker \beta_m$  with the direct summand $\bigoplus_{1 \ne i=0}^m \sO_{{\mathbb P}^1} (-i)$
and the map $\beta_m$ is given by the restriction to the summand $\sO_{{\mathbb P}^1} (-1)$ which, in turn,  is given
by multiplying by $t$.

All the rest of the statements follow immediately.
\end{proof}

$\;$

\vskip 9cm

\begin{picture}(3, 5)
\put(90,5){ \psfig{figure=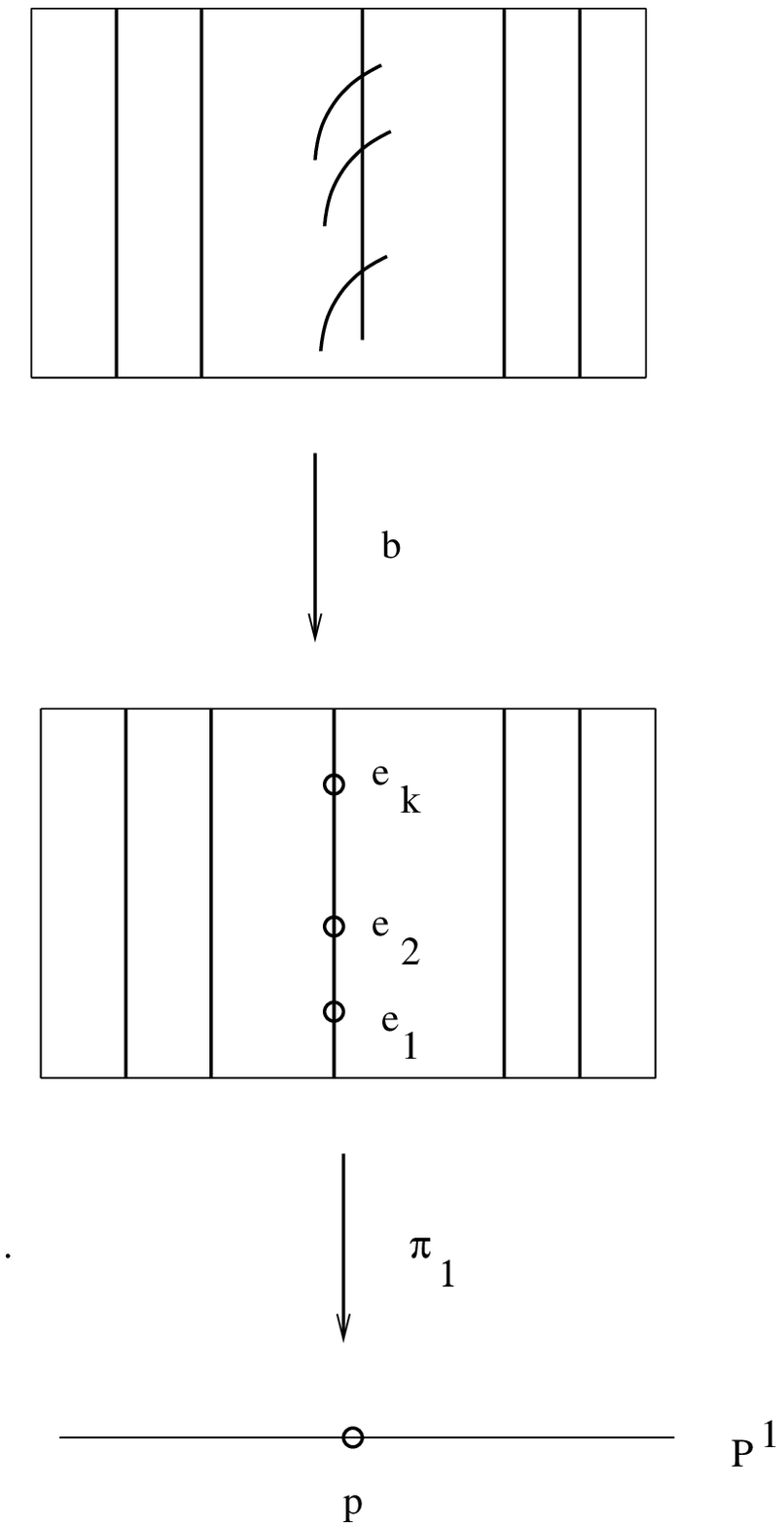, height = 8.5cm,width = 5cm}
}
\end{picture}


\centerline{Figure 3.  The central fiber is a smooth elliptic with
$k$ tails}

\bigskip

\subsection{Many tails that are smoothed in one direction}
\label{1-tailr}

\begin{say}
We can generalize the above to the case where the central fiber is
an elliptic curve with $k$ many tails. To do this, again on
${\mathbb P}^1 \times E$, we pick up $k$ points $(p, e_1), \cdots,
(p,e_k)$, and blow up ${\mathbb P}^1 \times E$ at  $(p, e_1),
\cdots, (p,e_k)$. This way, we obtain a smooth surface $Z$ whose
projection to  ${\mathbb P}^1$ provides a family $\pi: Z \lra
{\mathbb P}^1 \times E$ with the central fiber an elliptic curve
with $k$ rational tails, elsewhere the fiber is isomorphic to $E$.
In the moduli space, this represents a general direction along
which all the nodes are smoothed simultaneously. See Figure 3.
\end{say}

\begin{say}
Let $S_i$ be the proper transform of ${\mathbb P}^1 \times e_i$,
$1 \le i \le k$ and $D$ be the proper transform of  ${\mathbb P}^1
\times e$ where $e$ is a general point on $E$. Let ${\bf  m} =
(m_1, \cdots, m_k)$ and ${\bf S}=(S_1, \cdots, S_k)$. Let $m =
\sum_i m_i$ and ${\bf mS} = \sum_i m_i S_i$. We consider the
pushforward sheaf $\pi_* \sO_Z ({\bf mS} +D)$.
\end{say}

\begin{say}
Then all the previous results extend almost word by word to this
case with $m$, $S$ replaced by ${\bf  m} $, ${\bf S}$. This
suggests that  the direct image sheaf is sensitive only to the
smoothing direction, but not to the number of tails. The point is
that after carefully treating the case of one tail, it is almost
routine to treat the case of multiple tails.
\end{say}

\begin{say} One can consult \S \ref{rtails} below to see how to formulate similar
statements and arguments. For example, in the inductive proof  in
this case, to go from $m$ to $m+1$, one goes from $\sum_i m_i S_i$
to $(\sum_i m_i S_i) +S_j$. Since the results of this subsection
will not be used elsewhere in this work, we omit the routine
details.
\end{say}

\subsection{An $r$-tail case}
\label{rtails}

\begin{say}
We will now construct a family $Z \lra B$ such that the central
fiber is a smooth elliptic curve attached with a connected chain
of rational curves.
\end{say}

\vskip 7cm

\begin{picture}(3, 5)
\put(90,5){ \psfig{figure=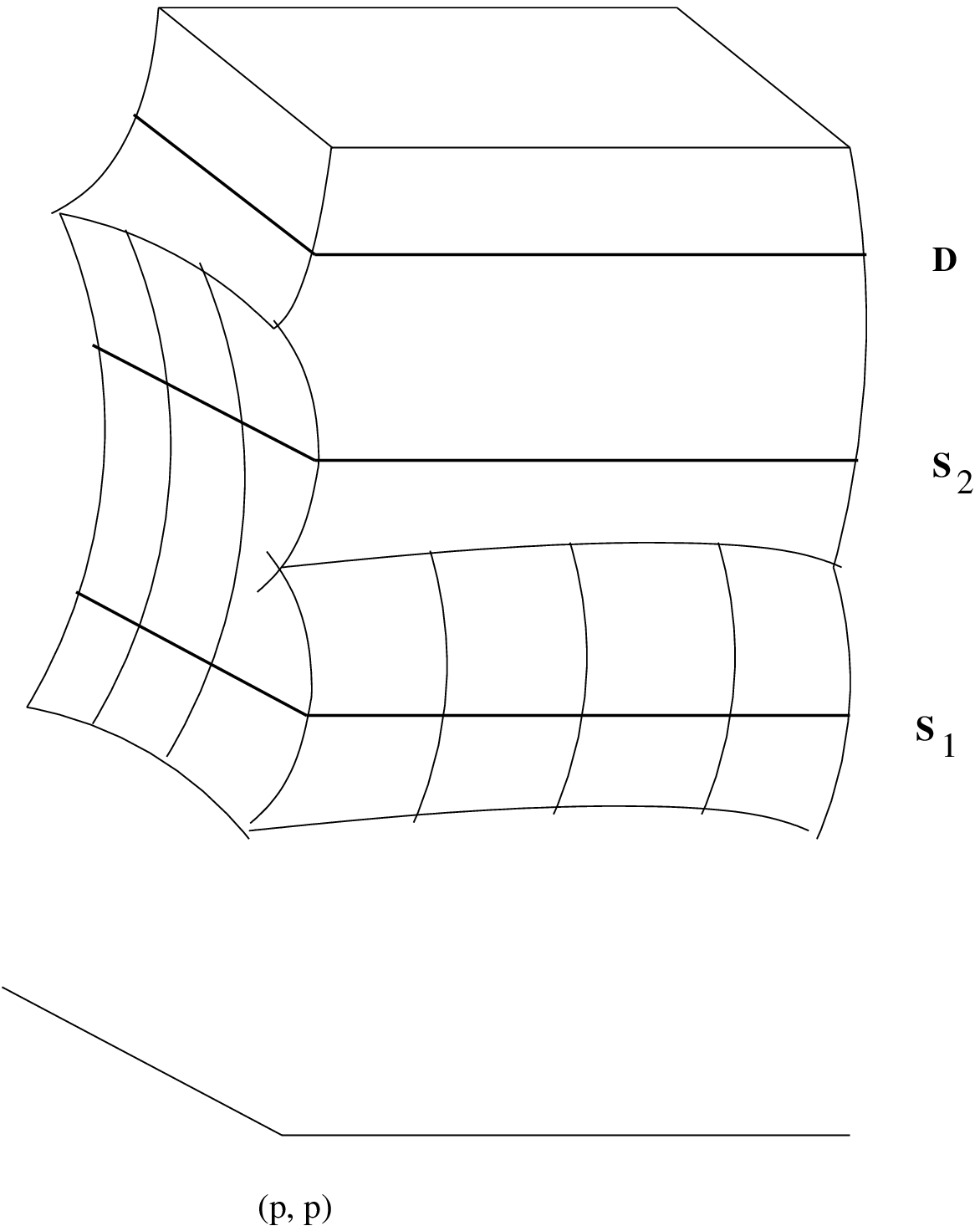, height = 6.5cm,width
= 5cm} }
\end{picture}


\centerline{Figure 4.  The central fiber has two rational
components}

\bigskip

\begin{say}
Consider the space $(\PP^1)^r \times E$. Let $p = [0,1] \in
\PP^1$. Pick $r$ distinct points $e_1, \cdots, e_r$ in $E$. For
any $1 \le i \le r$, Set $$W_i = \PP^1 \times \cdots \times  \PP^1
\times p \times  \PP^1 \times \cdots   \times \PP^1  \subset
(\PP^1)^r $$ where $p$ occurs in the i-th factor. Set $W_i' = W_i
\times e_i \subset  (\PP^1)^r \times E$, $1 \le i \le r$. Let $b:
Z \lra (\PP^1)^r \times E$ be the blowup of  $(\PP^1)^r \times E$
along the disjoint union $\coprod_i W_i'$. Let $S_i$ be the proper
transform of  $(\PP^1)^r \times e_i$ and $D$ be the proper
transform of  $(\PP^1)^r \times e$ where $e \in E$ is a point
distinct from all $e_1, \cdots, e_r$.
\end{say}

\begin{say}
Let $\pi$ be the  projection  $b: Z \lra (\PP^1)^r \times E$
followed by the projection to  $(\PP^1)^r$. Then this provides a
family of nodal elliptic curves: the  fiber over $\bar{p}:=(p,
\cdots, p)$ is a smooth elliptic curve with $r$ many rational
tails; moving to a general direction represented by coordinates
indexed by $i_1, \cdots, i_j$ ($1 \le j \le r$), the nodes created
by blowing up $W_{i_1}', \cdots, W_{i_j}'$ will be smoothed.
\end{say}

\begin{say}
Figure 4 shows the case of $r=2$. Here, the central fiber over
$(p,p)$ is $C_e\cup C_1 \cup C_2$ where $C_e \cong E$ and $C_1
\cong C_2 \cong \Po$; the section $D$ passes $C_e$; the sections
$S_i$ passes $C_i$ ($i=1,2$).
\end{say}

\begin{say} Let $C_j$ be the exceptional divisor of $Z$  corresponding
to $W_j'$. Observe that $\sO_Z(S_j + C_j) = \sO(D)$. Let ${\bf  m}
= (m_1, \cdots, m_r)$ and ${\bf S}=(S_1, \cdots, S_r)$. Let $m =
\sum_i m_i$ and ${\bf mS} = \sum_i m_i S_i$. We will study the
local freeness of  the direct image sheaf $\pi_* \sO_Z ({\bf
mS})$.
\end{say}

\begin{say}
We have a short exact sequence
\begin{equation}\label{shortSDr}
\begin{CD}
0 \lra  \mathscr O_Z(\bfmS) @>>>  \mathscr O_Z(\bfmS+D) @>>>
\mathscr O_Z(\bfmS+D)|_{D} \lra 0
\end{CD}
\end{equation}
and  a long exact sequence
\begin{equation}
\begin{CD}
\label{longSDr} 0 \lra \pi_* \mathscr O_Z(\bfmS) @>{\alpha_{\bf
m}}>> \pi_*  \mathscr O_Z(\bfmS+D) @>{\beta_{\bf m}}>>  \\ \pi_*
\mathscr O_Z(\bfmS+D)|_{D} @>{\gamma_{\bf m}}>>R^1\pi_* \mathscr
O_Z(\bfmS)\lra 0.
\end{CD}
\end{equation}
As in \S \ref{1-tail},  $\pi_*  \mathscr O_Z(\bfmS+D)$ is locally
free and $R\pi_*^1  \mathscr O_Z(\bfmS+D)=0.$
\end{say}

\begin{say} Since $\sO_Z(\bfmS+D)|_D=\sO_Z(D)|_D=N_{D\backslash Z}\cong
\sO_D$ and $\pi|_D:D \to (\PP^1)^r$ is an isomorphism,   we see
that $\pi_* \mathscr O_Z(\bfmS+D)|_{D}= \mathscr O_{(\PP^1)^r}.$
\end{say}

\begin{say}
The case of (\ref{longSDr}) when $m=0$ is special, we isolate it below.
$$
0 \lra \pi_* \mathscr O_Z \lra \pi_*  \mathscr O_Z(D) \lra \pi_*
\mathscr O_Z(D)|_{D} \lra R^1\pi_* \mathscr O_Z \lra 0.
$$
It is easy to see that this is
\begin{equation}\label{bfm=0}
\begin{CD}
0 \lra \mathscr O_{(\Po)^r} @>{\cong}>> \mathscr O_{(\Po)^r}
@>{0}>> \mathscr O_{(\Po)^r} @>{\cong}>>
 \mathscr O_{(\Po)^r}  \lra 0.
\end{CD}
\end{equation}
\end{say}

\begin{say}
To treat the general case,  similar to  \S  \ref{1-tail}  of the
1-tail case, we will first prove a formula for $\pi_* \mathscr
O_Z(\bfmS+D)$.
\end{say}

We begin with a lemma.

\begin{lemm}\label{OWj}
\label{OW=O1} $\pi_* N_{S_j \backslash Z}$ is  isomorphic to
$\sO_{(\Po)^r} (-W_j)$.  And both are isomorphic to $\pi_j^* \sO_{
\Po} (-1)$
where $\pi_j: (\Po)^r \lra \Po$ is the projection to the j-th
factor.
\end{lemm}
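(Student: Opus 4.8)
The plan is to reduce the statement to a single normal-bundle computation on $S_j$, after first disposing of the easy half. The divisor $W_j\subset(\Po)^r$ is by construction the preimage of the point $p=[0,1]$ under the $j$-th projection, i.e. $W_j=\pi_j^*(p)$ as a Cartier divisor. Since $\sO_\Po(p)\cong\sO_\Po(1)$, this gives at once $\sO_{(\Po)^r}(-W_j)\cong\pi_j^*\sO_\Po(-1)$, so the second asserted isomorphism is immediate and only $\pi_*N_{S_j\backslash Z}\cong\sO_{(\Po)^r}(-W_j)$ remains.

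Next I would show that $\pi$ restricts to an isomorphism $S_j\cong(\Po)^r$, which reduces the pushforward to a bare restriction. Write $A_j:=(\Po)^r\times e_j$; then $S_j$ is the proper transform of $A_j$ under $b$, and the blow-up center $W_j'=W_j\times e_j$ is a Cartier divisor of $A_j$ (codimension one in $A_j$, codimension two in $(\Po)^r\times E$). Blowing up a Cartier divisor is an isomorphism, so $b|_{S_j}\colon S_j\to A_j$ is an isomorphism; composing with the projection isomorphism $A_j=(\Po)^r\times e_j\cong(\Po)^r$ shows that $\pi|_{S_j}$ is an isomorphism carrying $W_j'$ to $W_j$. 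Consequently $\pi_*N_{S_j\backslash Z}$ is simply $N_{S_j\backslash Z}$ transported to $(\Po)^r$, and the lemma collapses to computing the line bundle $N_{S_j\backslash Z}=\sO_Z(S_j)|_{S_j}$.

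This normal-bundle computation is where the real work lies, and I expect it to be the main obstacle, since it requires tracking how the exceptional divisor meets the proper transform. Using the relation $\sO_Z(S_j+C_j)=\sO_Z(D)$ recorded earlier, I would write $\sO_Z(S_j)\cong\sO_Z(D)\otimes\sO_Z(-C_j)$ and restrict to $S_j$. The factor $\sO_Z(D)|_{S_j}$ is trivial because $D$ and $S_j$ are the proper transforms of the disjoint divisors $(\Po)^r\times e$ and $(\Po)^r\times e_j$ (with $e\neq e_j$), neither of which contains a blow-up center, so $D\cap S_j=\emptyset$. The factor $\sO_Z(-C_j)|_{S_j}$ equals $\sO_{S_j}(-(C_j\cap S_j))$; under the isomorphism $b|_{S_j}\colon S_j\cong A_j$ the divisor $C_j\cap S_j$ is exactly the exceptional locus of $S_j\to A_j$, which coincides with the center $W_j'$. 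Hence $\sO_Z(-C_j)|_{S_j}\cong\sO_{A_j}(-W_j')$, and combining the two factors gives $N_{S_j\backslash Z}\cong\sO_{A_j}(-W_j')$. Transporting along $\pi|_{S_j}$ yields $\pi_*N_{S_j\backslash Z}\cong\sO_{(\Po)^r}(-W_j)\cong\pi_j^*\sO_\Po(-1)$, which is the claim.

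The only points demanding care are local and can be verified in the standard affine charts of the blow-up: the multiplicity-one decomposition $b^*A_j=S_j+C_j$ and the identification $C_j\cap S_j\leftrightarrow W_j'$. As a consistency check, specializing to $r=1$ recovers the normal bundle $N_{S\backslash Z}\cong\sO_\Po(-1)$ already used in \S\ref{1-tail}.
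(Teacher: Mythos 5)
Your strategy is sound, and in its repaired form it coincides with the paper's second, ``no intersection numbers'' argument: identify $\pi|_{S_j}\colon S_j\to(\Po)^r$ as an isomorphism and read off $N_{S_j\backslash Z}$ as $\sO_{S_j}$ of minus the locus $C_j\cap S_j$, which corresponds to $W_j'$. Your opening step ($W_j$ is a fiber of $\pi_j$, hence $\sO_{(\Po)^r}(-W_j)\cong\pi_j^*\sO_{\Po}(-1)$) is also fine, and cleaner than the paper's determination of this bundle via intersection numbers with coordinate lines. There is, however, one step that fails as written: the identity $\sO_Z(S_j)\cong\sO_Z(D)\otimes\sO_Z(-C_j)$ rests on the relation $\sO_Z(S_j+C_j)=\sO_Z(D)$, and that relation --- although it is indeed asserted in the paper's setup --- is false as an identity of line bundles. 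Indeed $\sO_Z(S_j+C_j)=b^*\sO_{(\Po)^r\times E}((\Po)^r\times e_j)$ and $\sO_Z(D)=b^*\sO_{(\Po)^r\times E}((\Po)^r\times e)$, i.e.\ the pullbacks from $E$ of $\sO_E(e_j)$ and $\sO_E(e)$; since two distinct points of an elliptic curve are never linearly equivalent, and since pullback along $b$ and along the projection to $E$ are injective on Picard groups, the two bundles are genuinely non-isomorphic. The relation holds only up to numerical equivalence. That weaker statement suffices for the paper's first proof, which invokes it only through the intersection numbers $(S_j+C_j)\cdot l_j=D\cdot l_j$, but it does not suffice for your argument, which uses the relation as a bundle isomorphism before restricting to $S_j$ (your restriction claims themselves, e.g.\ that $\sO_Z(D)|_{S_j}$ is trivial because $D\cap S_j=\emptyset$, are correct; it is the first link of the chain that breaks).

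The fix is essentially contained in your own closing paragraph: work with the multiplicity-one decomposition $b^*A_j=S_j+C_j$ (where $A_j=(\Po)^r\times e_j$, in your notation) instead of the relation involving $D$. This gives the correct identity $\sO_Z(S_j)\cong b^*\sO_{(\Po)^r\times E}(A_j)\otimes\sO_Z(-C_j)$. Restricting to $S_j$, the first factor is $(b|_{S_j})^*\bigl(\sO_{(\Po)^r\times E}(A_j)|_{A_j}\bigr)=(b|_{S_j})^*N_{A_j\backslash (\Po)^r\times E}$, which is trivial because $A_j$ is a fiber of the projection $(\Po)^r\times E\to E$ and so has trivial normal bundle; this replaces the appeal to $D$. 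The second factor is $\sO_{S_j}(-(C_j\cap S_j))$ with $C_j\cap S_j$ mapping isomorphically to $W_j'$, exactly as you argue. With this one substitution your proof is correct, and it then agrees with the paper's alternative argument, which reads the same conclusion off from the fact that $S_j$ is the proper transform of the constant section $(\Po)^r\times e_j$, so that $N_{S_j\backslash Z}\cong\sO_{S_j}(-\text{exceptional divisor in }S_j)\cong\sO_{(\Po)^r}(-W_j)$.
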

\begin{proof}
Any bundle on $(\Po)^r$ is of the form $\bigotimes_{j=1}^r  \pi_j^* \sO_{ \Po}(a_j)$. We will determine the integer
$a_j$ for each of  $\pi_* N_{S_j \backslash Z}$ and  $\sO_{(\Po)^r} (-W_j)$.

First, it is easy to determine that $\sO_{(\Po)^r} (W_j) = \pi_j^* \sO_{\Po} (1)$ by looking at the intersection numbers of
of $W_j$ with the coordinate lines.
 Hence $\sO_{(\Po)^r} (-W_j) =\pi_j^* \sO_{\Po} (-1)$, as desired.

To show the rest, again we can argue  by computing intersection
numbers.  Let $l_i$ be the pre-image of the i-th coordinate line
by the isomorphism  $\pi|_{S_j}: S_j \stackrel{\cong}{\lra}
(\Po)^r$. Then we have  $S_j \cdot l_i = 0$ when $i \ne j$ because
$l_i$ can be moved to become a section of $C_j \cong W_j \times
\Po \lra W_j$, disjoint from $S_j$. When $i=j$, we have
$$(S_j + C_j) \cdot l_j = D \cdot l_j = 0.$$
Hence $S_j \cdot l_j = - C_j \cdot l_j$. But $C_j \cdot l_j=1$. Hence  $\pi_* N_{S_j \backslash Z} = \pi_j^* \sO_{\Po} (-1)$
because  $N_{S_j \backslash Z} = \sO_Z (S_j)|_{S_j}$.

We may also argue without calculating the intersection numbers.
Since $S_j$ is the proper transform of the  constant section
$(P^1)^r \times e_j$ in $(P^1)^r \times E$,  we have that the
normal bundle $N_{S_j \backslash Z}$ is isomorphic to
$$\sO_{S_j}(-\hbox{exceptional divisor in}\; S_j)$$ where the
exceptional divisor in $S_j$ is the pre-image of $W_j$ in $S_j$.
This implies that $N_{S_j \backslash Z} \cong \sO_{(\Po)^r}
(-W_j)$.
\end{proof}

\begin{say}
Now,
consider the  short exact sequence:
$$0 \lra  \mathscr O_Z(\bfmS+D) \lra  \mathscr O_Z(\bfmS+S_j+ D)
\lra   \mathscr O_Z(\bfmS +S_j+ D)|_S \lra 0.$$ Noting that we
have  $$\sO_Z(\bfmS+S_j+D)|_{S_j}=N_{S_j\backslash Z}^{\otimes
(m_j+1)},$$
$$\pi_* N_{S_j\backslash Z}^{\otimes
(m_j+1)} = \sO_{(\Po)^r}(-(m_i+1) W_j) \;\; \hbox{(Lemma
\ref{OWj})},$$ and
$$R^1 \pi_* \mathscr O_Z(mS+D)=0,$$ hence we obtain   a
short  exact sequence of locally free sheaves
\begin{equation}\begin{CD}\label{DDr}
0 \lra \pi_* \mathscr O_Z(\bfmS+D) \lra \pi_* \mathscr
O_Z(\bfmS+S_j+D) \\ \lra  \sO_{(\Po)^r}(-(m_j+1) W_j) \lra 0.
\end{CD}
\end{equation}
\end{say}

\begin{lemm}
\label{lemmSplit}
 The sequence (\ref{DDr}) splits and consequently
$$\pi_*  \mathscr O_Z(\bfmS+D) = \sO_{(\Po)^r} \oplus \bigoplus_{i=1}^r \bigoplus_{k=1}^{m_i} \sO_{(\Po)^r} (-kW_j). $$
\end{lemm}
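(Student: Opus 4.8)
\emph{The plan is to imitate the inductive strategy of \S\ref{1-tail}}, inducting on the total degree $m=\sum_i m_i$. The base case $\bfmS=0$ is the sequence (\ref{bfm=0}), which gives $\pi_*\sO_Z(D)=\sO_{(\Po)^r}$. For the inductive step I assume the asserted decomposition for $\pi_*\sO_Z(\bfmS+D)$ and try to split (\ref{DDr}), whose cokernel is the line bundle $\sO_{(\Po)^r}(-(m_j+1)W_j)$. By Lemma \ref{OWj} every bundle occurring is an external product $\pi_1^*\sO(a_1)\otimes\cdots\otimes\pi_r^*\sO(a_r)$, so all the $\Ext$- and cohomology groups are computable by the K\"unneth formula on $(\Po)^r$; in particular a splitting would follow from the vanishing of $\Ext^1_{(\Po)^r}\bl\sO(-(m_j+1)W_j),\pi_*\sO_Z(\bfmS+D)\br$, exactly as in the one-dimensional argument behind Lemma \ref{splitDD}.

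\emph{The first thing I would check is this $\Ext^1$-vanishing, and this is where the main obstacle appears.} Decomposing the target by the inductive hypothesis, the summands split into three types. The ``diagonal'' summands $\sO(-kW_j)$ with $1\le k\le m_j$ contribute $H^1(\pi_j^*\sO(m_j+1-k))=0$ and the $\sO$-summand contributes $H^1(\pi_j^*\sO(m_j+1))=0$, just as in the $1$-tail case. But the ``cross'' summands $\sO(-kW_i)$ with $i\ne j$ contribute $H^1\bl\pi_j^*\sO(m_j+1)\otimes\pi_i^*\sO(-k)\br$, and by K\"unneth this is nonzero as soon as $k\ge 2$, since $H^1(\Po,\sO(-k))\ne0$. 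Hence, whenever some coordinate $m_i$ with $i\ne j$ is at least $2$ (which for a general $\bfmS$ is unavoidable, e.g.\ $\bfmS=(2,2)$ regardless of which $S_j$ is added last), the relevant $\Ext^1$ does \emph{not} vanish. So the clean Serre-duality shortcut that works over the one-dimensional bases of \S\ref{1-tail} and \S\ref{1-tailr} genuinely breaks down over the higher-dimensional base $(\Po)^r$: the splitting of (\ref{DDr}) cannot be read off from an ambient $\Ext$-vanishing, and one must instead show that the \emph{particular} extension class of (\ref{DDr}) is zero.

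\emph{To circumvent this I would compute the push-forward directly and canonically, bypassing the extension altogether.} Using the relation $\sO_Z(S_i+C_i)=\sO_Z(D)$, write $\sO_Z(S_i)\cong\sO_Z(D)\otimes\sO_Z(-C_i)$, so that $\sO_Z(\bfmS+D)\cong b^*\pr_E^*\sO_E(L)\otimes\sO_Z(-\sum_i m_iC_i)$ for a degree-$(m+1)$ divisor $L$ on $E$, where $\pr_E\colon(\Po)^r\times E\lra E$. Since $R^qb_*\sO_Z(-\sum_i m_iC_i)=0$ for $q>0$ and $b_*\sO_Z(-\sum_i m_iC_i)=\bigcap_i\sI_{W_i'}^{m_i}$, the projection formula reduces everything to computing $\pr_*\bl\pr_E^*\sO_E(L)\otimes\bigcap_i\sI_{W_i'}^{m_i}\br$ on $(\Po)^r\times E$, where $\pr\colon(\Po)^r\times E\lra(\Po)^r$. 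Writing $\sN:=\pr_E^*\sO_E(L)$ and $\sI:=\bigcap_i\sI_{W_i'}^{m_i}$ and using that the centers $W_i'$ are pairwise disjoint, $\sO/\sI=\bigoplus_i\sO/\sI_{W_i'}^{m_i}$, so I can treat one $i$ at a time. A local coordinate computation near $W_i'=\{t_i=0,\,v=0\}$ (with $t_i$ cutting out $W_i$ on $(\Po)^r$ and $v$ a coordinate on $E$ at $e_i$) identifies $\pr_*\bl\sN\otimes\sO/\sI_{W_i'}^{m_i}\br$ with $\bigoplus_{c=1}^{m_i}\sO_{(\Po)^r}/(t_i^{c})$, a sum of structure sheaves of the $t_i$-adic thickenings of $W_i$. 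Feeding this into the four-term sequence
\[
0\lra\pr_*(\sN\otimes\sI)\lra\pr_*\sN\lra\textstyle\bigoplus_i\pr_*(\sN\otimes\sO/\sI_{W_i'}^{m_i})\lra R^1\pr_*(\sN\otimes\sI)\lra0,
\]
with $\pr_*\sN=\sO_{(\Po)^r}^{\oplus(m+1)}$ and $R^1\pr_*\sN=0$ (because $\deg L=m+1>0$), lets me read off simultaneously that $R^1\pr_*(\sN\otimes\sI)=0$ and that the kernel is $\sO_{(\Po)^r}\oplus\bigoplus_i\bigoplus_{k=1}^{m_i}\sO(-kW_i)$; the twist $\sO(-kW_i)=\pi_i^*\sO(-k)$ is produced precisely by the $t_i$-adic (conormal) filtration. \emph{I expect the one delicate point to be the bookkeeping of this last step}---matching $t_i$-orders of vanishing to the summands $-kW_i$ and verifying surjectivity of the middle map onto the torsion cokernel---but it is a finite local calculation, not a cohomological obstruction.

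Finally, the virtue of doing the computation this way is that the resulting decomposition is \emph{canonical}: each summand $\sO(-kW_i)$ is the ``pole order exactly $k$ along $S_i$'' piece, and these pieces are compatible as $\bfmS$ varies. In this canonical splitting the inclusion $\alpha_{\bfmS}$ of (\ref{DDr}) is the inclusion of the sub-sum and the restriction onto the cokernel is the projection onto the single new summand $\sO(-(m_j+1)W_j)$; hence (\ref{DDr}) splits, which is exactly the assertion of the lemma, and the displayed formula for $\pi_*\sO_Z(\bfmS+D)$ drops out. As an independent sanity check I would restrict the entire sequence to each coordinate axis of $(\Po)^r$, where only the $i$-th tail survives and the whole picture collapses to the $1$-tail computation of Proposition \ref{prop1tail}, matching $\bigoplus_{k=0}^{m_i}\sO_\Po(-k)\oplus\sO_\Po^{\oplus(m-m_i)}$.
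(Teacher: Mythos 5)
Your diagnosis of the Serre-duality shortcut is not only correct, it applies verbatim to the paper's own proof: Lemma \ref{lemmSplit} is proved there by exactly the argument you tried first and discarded, namely induction on $m=\sum_i m_i$ plus the assertion that
$$\Ext^1\Bigl(\sO_{(\Po)^r}(-(m_j+1)W_j),\ \sO_{(\Po)^r}\oplus\bigoplus_{i=1}^r\bigoplus_{k=1}^{m_i}\sO_{(\Po)^r}(-kW_i)\Bigr)=0,$$
claimed via Serre duality and Lemma \ref{OWj}. Your K\"unneth computation refutes that assertion: for $i\ne j$ and $k\ge 2$ one has
$$\Ext^1\bigl(\sO(-(m_j+1)W_j),\sO(-kW_i)\bigr)=H^1\bigl(\pi_j^*\sO_{\Po}(m_j+1)\otimes\pi_i^*\sO_{\Po}(-k)\bigr)\supseteq H^0(\sO_{\Po}(m_j+1))\otimes H^1(\sO_{\Po}(-k))\ne 0,$$
and no ordering of the induction avoids this once two coordinates of ${\bf m}$ are $\ge 2$ (your example $(2,2)$): whichever $S_j$ is added last, the other index carries a summand $\sO(-kW_i)$ with $k\ge2$. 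So you have located a genuine gap in the paper's proof (it is complete only when at most one $m_i\ge 2$); the statement of the lemma is still true, but over the higher-dimensional base some direct argument like yours is necessary, not merely an alternative.

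Two points in your own argument need repair before it closes the gap. First, the relation $\sO_Z(S_i+C_i)=\sO_Z(D)$ that you quote from the paper is false as stated: $S_i+C_i$ is the total transform of $(\Po)^r\times e_i$, so $\sO_Z(S_i+C_i)=b^*\pr_E^*\sO_E(e_i)$, while $\sO_Z(D)=b^*\pr_E^*\sO_E(e)$, and $e_i\not\sim e$ on an elliptic curve; the two divisors are only numerically equivalent, which is all that Lemma \ref{OWj} uses. The correct reduction is $\sO_Z(\bfmS+D)=b^*\pr_E^*\sO_E\bigl(\sum_i m_ie_i+e\bigr)\otimes\sO_Z(-\sum_i m_iC_i)$, i.e.\ $L=\sum_i m_ie_i+e$ rather than $(m+1)e$. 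This turns out to be harmless, because the answer depends only on $\deg L=m+1$, but as written your identification of the sheaf being pushed forward is wrong. Second, the bookkeeping you flag as delicate is where the real content lies, and naive bases genuinely fail: for ${\bf m}=(2,2)$ and $L=5e$, a section vanishing doubly at $e_1$ and one vanishing doubly at $e_2$ are forced to be proportional, both spanning the one-dimensional $H^0(L-2e_1-2e_2)$, so one cannot adapt a basis to the two points independently. The clean way to finish is dual to this observation: the $m$ jet functionals $\lambda_{i,a}$, $0\le a<m_i$ (coefficients of $v_i^a$ in the local expansion at $e_i$ after trivializing $\sN$ there), are linearly independent on the $(m+1)$-dimensional $H^0(E,\sO_E(L))$, because their common kernel $H^0(L-\sum_i m_ie_i)$ has degree $1$ and hence dimension exactly $1=(m+1)-m$. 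Taking the dual basis decouples the congruence conditions into $f_{(i,a)}\in(t_i^{m_i-a})$, which yields precisely $\sO_{(\Po)^r}\oplus\bigoplus_i\bigoplus_{k=1}^{m_i}\sO_{(\Po)^r}(-kW_i)$; the same degree-one positivity gives $H^1\bigl(L-\sum_i m_ie_i\bigr)=0$, hence the fiberwise surjectivity of the jet map and $R^1\pr_*=0$.

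With those two repairs your proof is complete, and it buys more than the paper's argument attempted: the splitting is canonical and compatible as ${\bf m}$ varies, which is what the proof of Proposition \ref{propRtail} actually uses. Note also that once your computation gives the displayed formula for both $\bfmS$ and $\bfmS+S_j$, the splitting of (\ref{DDr}) itself is formal: every summand of $\pi_*\sO_Z(\bfmS+S_j+D)$ other than $\sO(-(m_j+1)W_j)$ admits only the zero map to $\sO(-(m_j+1)W_j)$, so the surjection in (\ref{DDr}) restricts to an isomorphism on that summand, providing the section.
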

\begin{proof}
We proceed by induction on $\sum m_i=m$. When $m=0$, we have
$$\pi_* \sO_Z(D) = \sO_{(\Po)^r}.$$
Hence the lemma holds in this case.

Assume that when  $\sum m_i=m$, the lemma holds. Then for the case
of $m+1$,  by (\ref{DDr}), we have
\begin{equation*}
\begin{CD}
0 \lra \sO_{(\Po)^r} \oplus \bigoplus_{i=1}^r
\bigoplus_{k=1}^{m_i} \sO_{(\Po)^r} (-kW_j) \lra \pi_*
\mathscr O_Z(\bfmS+S_j+D) \\
\lra  \sO_{(\Po)^r}(-(m_j+1) W_j) \lra 0.
\end{CD}
\end{equation*}
Now applying  Serre's duality and Lemma, we obtain
 $$\Ext^1 (\sO_{(\Po)^r}(-(m_j+1) W_j),   \sO_{(\Po)^r} \oplus \bigoplus_{i=1}^r \bigoplus_{k=1}^{m_i} \sO_{(\Po)^r} (-kW_j))=0, $$
 hence the sequence is the trivial one.
\end{proof}

\begin{say} Our aim is to explicitly describe the sheaf $\pi_* \mathscr
O_Z(\bfmS)$. By (\ref{bfm=0}), we may assume $m >0$. Let  $[t_i,
s_i]$ be  the homogeneous coordinates of  the i-th factor
${\mathbb P}^1$ in the product $(\PP^1)^r$. Then
$$W_i = \{ t_i =0 \}.$$
Note that $\cap_i W_i = \bar{p}$.
Let $$\sV_1 = \bigoplus_{i=1}^r \sO_{(\PP^1)^r} (-W_i)= \bigoplus_i \pi_i^* \sO_{\PP^1}(-1).$$
Then we have a canonical  map $\beta$,
 \begin{equation}\label{betar}
\begin{CD}
\sV_1  @>>> \mathscr O_{(\PP^1)^r} \\
(h_1, \cdots, h_r)  @>>> \sum_i t_i h_i
\end{CD}
\end{equation}
which induces a canonical exact sequence
 \begin{equation}\label{CoreSequencer}
\begin{CD}
0 \lra  \ker \beta  @>>> \sV_1  @>{\beta}>> \sO_{(\PP^1)^r}
@>{\gamma}>>   {{\sO_{(\PP^1)^r}}}|_W = \kk(\bar{p}) \lra 0
\end{CD}
\end{equation}
where $\kk(\bar{p})$ is the structure sheaf of $\bar{p}$.
This sequence is a higher dimensional generalization of (\ref{beta}) and plays similar role in the proof.
\end{say}

Set $$\sV_{{\bf m},0} =  \sO_{(\Po)^r} \oplus \bigoplus_{ j=1}^r \bigoplus_{k=2}^{m_j} \sO_{(\Po)^r} (-kW_j).$$

\begin{prop}
\label{propRtail}
 Assume\footnote{When some $m_j =0$, we can ignore $W_j$, $S_j$, and the family $C_j$ from the consideration, this tail will not affect
the sheaf $\pi_* \mathscr O_Z(\bfmS)$, and the problem is reduced to the previous $(r-1)$-tail case. This is analogous to
the situation of \S \ref{1-tailr}.}
 that $m_j \ne 0$ for all $1 \le j \le r$.  Then up to isomorphism, we  have  a commutative diagram
\begin{equation}
\label{rtailmain}
\begin{CD}
0 \lra  \ker \beta  @>>> \sV_1  \\
@VVV @VVV\\ 0 \lra \pi_* \mathscr O_Z(\bfmS) @>{\alpha_{\bf m}}>>
\pi_* \mathscr O_Z(\bfmS+D)
\end{CD}
\end{equation}
\begin{equation*}
\begin{CD}
@>{\beta}>> \sO_{(\PP^1)^r} @>{\gamma}>>   \kk(\bar{p}) \lra 0 \\
&&  @VVV  @VVV  \\
 @>{\beta_{\bf m}}>>  \sO_{(\PP^1)^r}
@>{\gamma_{\bf m}}>>    R^1\pi_* \mathscr O_Z(\bfmS) \lra 0
\end{CD}
\end{equation*}
where the first two downward arrows are inclusions, and the last
two are isomorphisms.  Moreover, we have
\begin{enumerate}
\item $\pi_*  \mathscr O_Z(\bfmS+D) = \sV_{{\bf m},0}  \oplus  \sV_1$.
\item $\pi_* \mathscr O_Z(\bfmS) \cong \ker \beta_{\bf m} = \sV_{{\bf m},0}  \oplus \ker \beta $.
\item $\beta_{\bf m} |_{\sV_1} = \beta$.
\end{enumerate}
\end{prop}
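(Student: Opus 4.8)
The plan is to follow the template of the $1$-tail Proposition~\ref{prop1tail}, promoting each scalar assertion to its $(\PP^1)^r$-version. Part (1) is immediate from Lemma~\ref{lemmSplit}: that lemma gives $\pi_*\sO_Z(\bfmS+D)=\sO_{(\Po)^r}\oplus\bigoplus_{i=1}^r\bigoplus_{k=1}^{m_i}\sO_{(\Po)^r}(-kW_i)$, and I would simply regroup the summands, collecting the $r$ pieces with $k=1$ into $\sV_1$ and the free summand together with the pieces $k\ge2$ into $\sV_{{\bf m},0}$, so that $\pi_*\sO_Z(\bfmS+D)=\sV_{{\bf m},0}\oplus\sV_1$.

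Next I would make the boundary map $\beta_{\bf m}$ of (\ref{longSDr}) explicit. Since $\pi_*\sO_Z(\bfmS+D)|_D=\sO_{(\PP^1)^r}$, a homomorphism out of the summand $\sO_{(\Po)^r}(-kW_i)$ is the same as a section of $\pi_i^*\sO_{\Po}(k)$, that is, a degree-$k$ form $c_{i,k}(t_i,s_i)$ in the $i$-th pair of coordinates, while on the free summand $\beta_{\bf m}$ is a constant $c_0$; thus $\beta_{\bf m}(h_0,(h_{i,k}))=c_0h_0+\sum_{i,k}c_{i,k}h_{i,k}$. The decisive input is base change for (\ref{longSDr}): because every $m_j\ne0$, a fiberwise cohomology computation gives $H^1(Z_b,\sO(\bfmS))=0$ for $b\ne\bar p$ (off $\bar p$ some coordinate is nonzero, so $\bfmS$ meets the core of $Z_b$) and $H^1(Z_{\bar p},\sO(\bfmS))=\kk$ (the core $C_e$ contributes $H^1(C_e,\sO_{C_e})$). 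Hence $\coker\beta_{\bf m}=R^1\pi_*\sO_Z(\bfmS)\cong\kk(\bar p)$, so the image of $\beta_{\bf m}$ is exactly the ideal sheaf $\sI_{\bar p}=(t_1,\dots,t_r)$.

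I would then extract the coefficients from this ideal condition. A nonzero $c_0$ would make $\beta_{\bf m}$ surjective, so $c_0=0$; and every $c_{i,k}$ must vanish at $\bar p$, forcing $t_i\mid c_{i,k}$, so that $c_{i,k}=t_i a_{i,k}$ with $a_{i,k}\in H^0(\pi_i^*\sO_{\Po}(k-1))$ for $k\ge2$. To pin the degree-one coefficient $c_{i,1}$ to a \emph{nonzero} multiple of $t_i$ I would, exactly as in Proposition~\ref{prop1tail}, argue functorially: setting the remaining $m_j=0$ reduces by the footnote to the $1$-tail case in the $i$-th direction, where the $\sO(-W_i)$-summand maps by multiplication by $t_i$; and the morphism of long exact sequences induced by adding back the effective divisor $\sum_{j\ne i}m_jS_j+(m_i-1)S_i$ transports this nonzero map unchanged onto the $\sO(-W_i)$-summand of $\beta_{\bf m}$, since that divisor is disjoint from $D$ and the relevant right-hand vertical arrow is the identity.

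Finally I would re-split to absorb the higher terms, as in the $1$-tail argument: writing a section as its $\ker\beta_{\bf m}$-part plus a correction of the form $(0,\dots,h_{i,1}+\sum_{k\ge2}a_{i,k}h_{i,k},\dots,0)$ in the $\sO(-W_i)$-slots exhibits, after a new splitting of $\pi_*\sO_Z(\bfmS+D)=\sV_{{\bf m},0}\oplus\sV_1$, the relations $\beta_{\bf m}|_{\sV_{{\bf m},0}}=0$ and $\beta_{\bf m}|_{\sV_1}=\beta$, the latter being $(h_1,\dots,h_r)\mapsto\sum_i t_i h_i$ of (\ref{betar}). This gives $\ker\beta_{\bf m}=\sV_{{\bf m},0}\oplus\ker\beta\cong\pi_*\sO_Z(\bfmS)$, which are parts (2) and (3), and superimposes the canonical sequence (\ref{CoreSequencer}) onto (\ref{longSDr}) with the two middle inclusions and the two right-hand isomorphisms, i.e.\ the diagram (\ref{rtailmain}). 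I expect the main obstacle to be this last coordination: ensuring the degree-one coefficients are nonzero in all $r$ directions at once and that a single re-splitting of $\pi_*\sO_Z(\bfmS+D)$ kills $\beta_{\bf m}$ on $\sV_{{\bf m},0}$ simultaneously across the factors; each direction is formally identical to the $1$-tail case, but the simultaneous bookkeeping is where care is needed.
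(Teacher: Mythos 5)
Your proposal is correct and follows essentially the same route as the paper's proof: part (1) from Lemma \ref{lemmSplit}, a coordinate expression of $\beta_{\bf m}$ via $\Hom$ computations, a base-change/support argument to kill $c_0$ and force $t_i \mid c_{i,k}$, a comparison of long exact sequences to pin the degree-one coefficients to nonzero multiples of $t_i$, and a final re-splitting of $\pi_*\sO_Z(\bfmS+D)$ yielding (2), (3) and the diagram. The only structural difference is the anchor for the comparison: the paper treats the all-ones case ${\bf m}={\bf 1}$ directly (there the support argument alone forces \emph{every} $c_i$ to be a nonzero multiple of $t_i$, since a vanishing $c_i$ would make the cokernel's support positive-dimensional) and then compares ${\bf 1}\le{\bf m}$ in a single diagram, whereas you anchor at the $r$ unit-vector cases via the footnote and run the comparison once per direction; both rest on identical ingredients, and the paper's choice merely avoids repeating the bookkeeping $r$ times.

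One inference in your write-up is overstated and should be repaired, although nothing downstream depends on it. Cohomology and base change gives the \emph{fiber dimensions} of $R^1\pi_*\sO_Z(\bfmS)$ (one at $\bar p$, zero elsewhere), hence that the cokernel of $\beta_{\bf m}$ is supported exactly at $\bar p$, that $\beta_{\bf m}$ is not surjective, and that its image is \emph{contained} in $\sI_{\bar p}$. It does not give $R^1\pi_*\sO_Z(\bfmS)\cong\kk(\bar p)$, nor that the image equals $\sI_{\bar p}$: a cokernel whose local ideal at $\bar p$ is $(t_1^2,t_2,\dots,t_r)$ has the same fiber dimensions but is not the reduced skyscraper. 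In the paper this isomorphism is the \emph{output} of the argument --- it follows only after $\beta_{\bf m}$ has been identified with $\beta$ on $\sV_1$ and with zero on $\sV_{{\bf m},0}$, whence $\coker\beta_{\bf m}=\sO_{(\PP^1)^r}/(t_1,\dots,t_r)=\kk(\bar p)$. Since your subsequent deductions ($c_0=0$, $t_i\mid c_{i,k}$) use only the correct containment, and your functorial argument for $c_{i,1}\neq 0$ does not invoke the ``exactly $\sI_{\bar p}$'' claim, the proof stands once that sentence is weakened to the support statement.
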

\begin{proof} The proof is totally analogous to the proof given in  Proposition \ref{prop1tail} with main  exception
 that $\ker \beta$ needs not to be trivial in this higher dimensional case.  Below, we will provide enough details.

(1) is just a rephrase of Lemma \ref{lemmSplit}. We will prove the rest all together.

The initial case is when all $m_j = 1$, $1 \le j \le r$. Denote
$(1, \cdots, 1) \in {\mathbb Z}^r$ by ${\bf 1}$. We have  $$\pi_*
\mathscr O_Z({\bf 1 \cdot S}+D) =\sO_{(\Po)^r} \oplus  \sV_1.$$
Hence
\begin{equation*}
\begin{CD}
0 \lra \pi_* \mathscr O_Z(\bf 1 \cdot  S) @>{\alpha_{\bf 1}}>>
\sO_{(\Po)^r} \oplus  \sV_1
 @>{\beta_{\bf 1}}>>  \sO_{(\PP^1)^r} \\
  @>{\gamma_{\bf 1}}>>    R^1\pi_* \mathscr O_Z(\bf 1\cdot  S)    \lra
  0.
\end{CD}
\end{equation*}
So, we  can express $\beta_{\bf 1}$ as
$$(h_0, h_1, \cdots, h_r) \mapsto c_0 h_0 +  \sum_i  c_i h_i  $$
where $h_0, h_1 \in  \sO_{(\Po)^r}$, $c_0$ is a fixed scalar, and
$c_i (t_i,s_i) \in H^0(\PP^1, \sO_{\PP^1} (1))$ are fixed
sections. The same argument as in the case of $r=1$ implies that
$c_0=0$ and $t_i \mid c_i$. By changing coordinates if necessary,
we can write  $c_i = t_i$. Then $(h_0, h_1, \cdots, h_r) \in \ker
\beta_{\bf 1}$ if and only if $\sum_i  t_i h_i =0$. That is, $\ker
\beta_{\bf 1} = \ker \beta \oplus \sO_{(\Po)^r}$. Hence the case
${\bf m} = {\bf 1}$ of the lemma follows.

When ${\bf m} \ge {\bf 1}$, meaning all $m_j \ge 1$, we have
\begin{equation*}
\begin{CD}
0 \lra \pi_* \mathscr O_Z(\sum_i S_i) @>{\alpha_{\bf 1 }}>> \pi_*
\mathscr O_Z(\sum_i S+i+ D) \\
@VVV @VVV \\ 0 \lra \pi_* \mathscr O_Z(\bfmS) @>{\alpha_{\bf m}}>>
\pi_* \mathscr O_Z(\bfmS+D)
\end{CD}
\end{equation*}
\begin{equation*}
\begin{CD}
@>{\beta_{\bf 1}}>>   \sO_{{\mathbb P}^1}  @>{\gamma_{\bf 1}}>>
R^1\pi_* \mathscr O_Z(\sum_i S_i)\lra 0 \\
&& @VVV @ VVV  \\
@>{\beta_{\bf m}}>>  \sO_{{\mathbb P}^1} @>{\gamma_{\bf m}}>>
R^1\pi_* \mathscr O_Z(\bfmS)\lra 0
\end{CD}
\end{equation*}
 Hence we can assume that $\beta_{\bf m}|_{V_1} = \beta$. Then
the map $\beta_{\bf m}$ is given by
$$
\begin{CD}
(h_0,  (h_1, \cdots, h_r), h_{12}, \cdots, h_{1m_1}, \cdots,
h_{r2}, \cdots, h_{rm_r})  \\
@VVV \\
 c_0 h_0 + \sum_{j=1}^r t_j h_j + \sum_{j=1}^r
\sum_{k=2}^{m_j} c_{jk} h_{jk}.
\end{CD}
$$
 As in Proposition
\ref{prop1tail}, $c_0 = 0$ and we may write $c_{jk} = t_j a_{jk}$
($ 1 \le j \le r, 2 \le k \le m_j$). Hence $(h_0,  h_1, \cdots,
h_r, h_{12}, \cdots, h_{1m_1}, \cdots,  h_{r2}, \cdots, h_{rm_r})
\in \ker \beta_{\bf m}$ if and only if
$$\sum_{j=1}^r t_j (h_j+  \sum_{k=2}^{m_j} a_{jk} h_{jk}) = 0.$$
We can split an arbitrary point
 $$(h_0,  (h_1, \cdots, h_r), h_{12}, \cdots, h_{1m_1}, \cdots,  h_{r2}, \cdots, h_{rm_r})$$
as the sum of
$$(h_0,  (\cdots,  -\sum_{k=2}^{m_j} a_{jk} h_{jk}, \cdots), h_{12}, \cdots, h_{1m_1}, \cdots,  h_{r2}, \cdots, h_{rm_r})$$
and
$$(0,  (\cdots,  h_j+\sum_{k=2}^{m_j} a_{jk} h_{jk}, \cdots), 0, \cdots, 0).$$
This gives a splitting of $\pi_*  \mathscr O_Z(\bfmS+D)$ as the direct sum of $\sV_{{\bf m},0}$ and $\sV_1$. Under this direct sum,
we see that $\ker \beta_{\bf m} = \sV_{{\bf m},0} \oplus \ker \beta$. The rest follows straightforwardly.
\end{proof}

\begin{say}
Clearly,  $\ker \beta$,  independent of ${\bf m}$,
 is the sole cause for non-local freeness of the sheaf $\pi_* \mathscr O_Z(\bfmS)$. Blowup is to make
$\ker \beta$ locally free, thus resolves the sheaf $\pi_* \mathscr
O_Z(\bfmS)$.
\end{say}

\begin{rema} \begin{enumerate} \item  Note here that the essence of the arguments on the sheaf is independent of the genus of the family.
\item Also, the decomposition $\sV_1 = \bigoplus_i \pi_i^*
\sO_{\PP^1}(-1)$ seems to suggest  a relation with either
restricting the family to the coordinate directions or projections
to the coordinate directions.
\end{enumerate}
\end{rema}

\begin{say}
Let $\varphi: \widetilde{(\Po)^r} \lra (\Po)^r$ be the blowup at
the point $\bar{p}=(p, \cdots, p)$. We have the square
\begin{equation}
\begin{CD}
\tilde{Z} @>{\psi}>> Z \\
 @V{\tilde{\pi}}VV       @V{\pi}VV    \\
\widetilde{(\Po)^r} @>{\varphi}>> (\Po)^r.
\end{CD}
\end{equation}
\end{say}

\begin{prop}
The sheaf $\tilde{\pi}_* \psi^* \mathscr O_Z(\bfmS)$ is locally free.
\end{prop}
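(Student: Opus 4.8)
The plan is to pull back the ``core'' short exact sequence on $Z$,
$$0\lra \sO_Z(\bfmS)\lra\sO_Z(\bfmS+D)\lra\sO_Z(\bfmS+D)|_D\lra 0,$$
along $\psi$ and then push it forward by $\tilde\pi$, thereby reducing the whole question to the behaviour of $\ker\beta$ under $\varphi$. Since $\pi$ is flat and $\varphi$ is the blowup of the point $\bar p$, the fibre square realizes $\tilde Z$ as $Z\times_{(\Po)^r}\widetilde{(\Po)^r}$; in particular $\tilde Z$ is integral, $\tilde\pi$ is flat, and every fibre of $\tilde\pi$ is one of the curves $Z_{\varphi(q)}$ (the fibres over the exceptional divisor $\sE$ of $\varphi$ are copies of the central fibre $Z_{\bar p}$). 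Because the three terms of the core sequence are, respectively, a line bundle, a line bundle, and a line bundle on the Cartier divisor $D$, applying $\psi^*$ keeps the sequence exact: the only thing to check is that multiplication by the pulled-back equation of $D$ is injective, which holds since $\tilde Z$ is integral and $\psi^{-1}(D)\neq \tilde Z$.

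Next I would push the pulled-back sequence forward by $\tilde\pi$ and read off the long exact sequence. For the two outer terms I would invoke cohomology and base change: since the fibrewise $H^1$ of $\sO_Z(\bfmS+D)$ vanishes on every fibre (this is exactly what gave $R^1\pi_*\sO_Z(\bfmS+D)=0$ and the constancy of $h^0$), the sheaf $\tilde\pi_*\psi^*\sO_Z(\bfmS+D)$ is locally free and equals $\varphi^*\pi_*\sO_Z(\bfmS+D)=\varphi^*(\sV_{{\bf m},0}\oplus\sV_1)$; likewise, since $\sO_Z(\bfmS+D)|_D=\sO_D$ and $\tilde\pi$ restricts to an isomorphism on $\psi^{-1}(D)$, the term $\tilde\pi_*\psi^*(\sO_Z(\bfmS+D)|_D)$ is $\varphi^*\sO_{(\Po)^r}=\sO_{\widetilde{(\Po)^r}}$. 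By naturality of the base-change maps, the resulting boundary map is $\varphi^*\beta_{\bf m}$, so that $\tilde\pi_*\psi^*\sO_Z(\bfmS)=\ker(\varphi^*\beta_{\bf m})$. Now I would feed in Proposition \ref{propRtail}: under the splitting $\pi_*\sO_Z(\bfmS+D)=\sV_{{\bf m},0}\oplus\sV_1$ the map $\beta_{\bf m}$ vanishes on $\sV_{{\bf m},0}$ and restricts to $\beta$ on $\sV_1$. Hence $\varphi^*\beta_{\bf m}=0\oplus\varphi^*\beta$ and
$$\ker(\varphi^*\beta_{\bf m})=\varphi^*\sV_{{\bf m},0}\oplus\ker(\varphi^*\beta),$$
with $\varphi^*\sV_{{\bf m},0}$ locally free. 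Everything is thus reduced to showing that $\ker(\varphi^*\beta)$ is locally free.

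The key computation is the following. By the core sequence (\ref{CoreSequencer}) the image of $\beta\colon\sV_1\to\sO_{(\Po)^r}$ is the ideal sheaf $\sI_{\bar p}$ of the point $\bar p$. By the universal property of the blowup, $\varphi^{-1}\sI_{\bar p}\cdot\sO_{\widetilde{(\Po)^r}}=\sO(-\sE)$ is invertible, so $\varphi^*\beta$ factors as
$$\varphi^*\sV_1\;\xrightarrow{\ \tilde\beta\ }\;\sO(-\sE)\;\hookrightarrow\;\sO_{\widetilde{(\Po)^r}},$$
where the second arrow (multiplication by the equation of $\sE$) is injective and $\tilde\beta$ is surjective onto the line bundle $\sO(-\sE)$. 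Therefore $\ker(\varphi^*\beta)=\ker\tilde\beta$ is the kernel of a surjection of a locally free sheaf onto a line bundle, hence itself locally free (of rank $r-1$). Combining, $\tilde\pi_*\psi^*\sO_Z(\bfmS)=\varphi^*\sV_{{\bf m},0}\oplus\ker\tilde\beta$ is locally free.

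The step I expect to be the main obstacle is the middle one: justifying that the boundary map obtained after pushing forward is genuinely $\varphi^*\beta_{\bf m}$, rather than merely the kernel of some a priori unidentified map. This rests on the naturality of the cohomology-and-base-change isomorphisms for the two outer terms together with the exactness of $\psi^*$ applied to the core sequence; once this identification is secured, the structural input from Proposition \ref{propRtail} and the blowup factorization close the argument cleanly. It is worth noting that, as the final remark of \S\ref{rtails} already anticipates, $\ker\beta$ is the sole source of non-local-freeness, and the blowup of $\bar p$ is designed precisely to make it locally free.
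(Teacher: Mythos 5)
Your proof is correct, and it supplies an argument where the paper gives none: the paper's ``proof'' of this proposition consists of the sentence that it ``can be checked directly,'' with a pointer to \cite{HL08} for the general case and an invitation to the reader to do the example. Your proposal is exactly that direct check, organized in the spirit of the surrounding text. The structure is sound at every step: pulling back the core sequence stays exact because the first arrow is multiplication by a nonzero section on an integral scheme; the base-change identifications for the two right-hand terms hold because $H^1$ of $\sO_Z(\bfmS+D)$ vanishes on every fibre and $\pi|_D$ is an isomorphism, and their naturality identifies the pushed-forward map with $\varphi^*\beta_{\bf m}$ (you rightly flag this as the crux); Proposition \ref{propRtail} then reduces everything to $\ker(\varphi^*\beta)$; finally, since the sequence (\ref{CoreSequencer}) shows $\image \beta=\sI_{\bar p}$, the universal property of the blowup gives $\varphi^{-1}\sI_{\bar p}\cdot\sO_{\widetilde{(\Po)^r}}=\sO(-\sE)$, so $\varphi^*\beta$ factors through a surjection $\tilde\beta:\varphi^*\sV_1\to\sO(-\sE)$ onto a line bundle, whose kernel is automatically locally free. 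This is precisely the rigorous form of the paper's remark that $\ker\beta$ is ``the sole cause'' of non-local-freeness, and it explains conceptually why blowing up $\bar p$ resolves the sheaf; the same argument works verbatim whenever the image of $\beta$ becomes invertible after pullback, which is what the general proof in \cite{HL08} exploits. One small point you should make explicit: the integrality of $\tilde Z$ is not automatic for an arbitrary fibre product, but follows here because blowups commute with flat base change, so $\tilde Z\cong \Bl_{Z_{\bar p}}Z$ is the blowup of the integral scheme $Z$ along the central fibre, hence integral.
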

\begin{proof}
This can be checked directly. (See \cite{HL08} for a general
proof. The reader is encouraged to do this example.)
\end{proof}

\subsection{A further example} 
\label{deepExample}
\begin{say}
 To gain further intuition  around deeper
strata of the moduli space $\MPd$, let us look at the stratum of
the type $o[a[b,c]]$.  $o[a[b,c]]$ is the square bracket notation
for the first tree of Figure 5. (The interested reader is referred
to \cite{HL08} for further explanation of this notational scheme;
he is also encouraged  to construct a family for the second curve
in Figure 5.)  Here, any curve in the stratum consists of a smooth
elliptic curve $C_o$, a rational curve $C_a$ attached to $C_o$,
and two more rational curves $C_b$ and $C_c$ attached to $C_a$. A
line bundle over the family of the interest is trivial on both
$C_o$ and $C_a$, and positive on $C_b$ and $C_c$.
\end{say}

We will construct explicitly such a family $Z$ over a three
dimensional base $B$. We work over $\mathbb C$ in this subsection.

\begin{say}
Let $Z_a$ be the family of nodal elliptic curve constructed in \S
\ref{1-tail}. That is, $Z_a$ is the blowup of $\Po \times E$ at
$(p,e_0)$. We would like to have two disjoint sections that go
through the rational curve of the central fiber. So, we take a
small analytic disc $\Delta$ of the base center at $p$.
We denote the family restricted to $\Delta$ by $Z_a|_\Delta$. So,
let $S_{a,i}$ be two disjoint sections of  $Z_a|_\Delta$ that go
through the rational curve of the central fiber, $i=b,c$. Let
$D_a$ be the proper transform of  $\Po \times e$ in $Z_a$, $e \ne
e_0$.
\end{say}

\begin{say}
Consider $Z_a|_\Delta \times \Po \times \Po$. Let $W_b = S_{a,b}
\times p \times \Po $ and  $W_c =  S_{a,c} \times \Po \times p $.
Let $Z$ be the blowup of  $Z_a|_\Delta \times \Po \times \Po$
along $W_b \coprod W_c$.

Then we obtain a family of nodal elliptic curve over  $B=\Delta
\times \Po \times \Po$
$$Z \lra B$$
such that: moving along the first coordinate direction, the node
$a$ is smoothed, but the elliptic curve comes with two solid
tails; moving along the second direction, the node $b$ is
smoothed;
 moving along the last direction, the node $c$ is smoothed.
\end{say}

\vskip 8cm

\begin{picture}(3, 5)
\put(30,5){ \psfig{figure=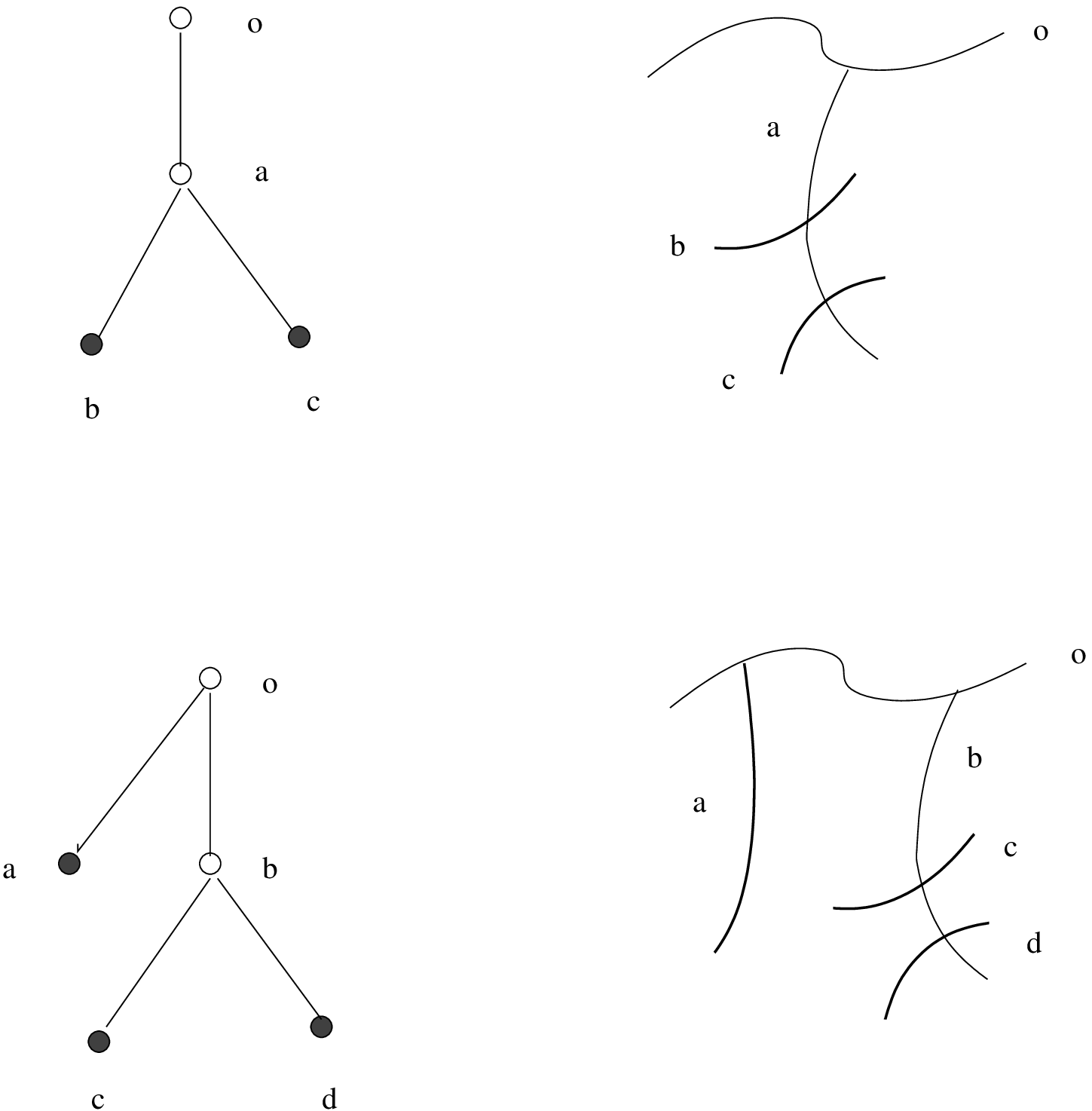, height = 7.5cm,width =
9cm} }
\end{picture}

\vskip .5cm

\centerline{Figure 5.  Two elliptic curves and two trees}

\vskip .5cm

\begin{say}
Let $D$ be the proper transform of $D_a|_\Delta \times \Po \times
\Po$ in $Z$, $S_b$  the proper transform of $S_{a,b} \times \Po
\times \Po$, and $S_c$ the proper transform of $S_{a,c} \times \Po
\times \Po$.
\end{say}

Let $V_a = p \times \Po \times \Po$, $V_b = p \times \Po \times \Po$, $V_c=  \Po \times p \times \Po$.

Let $\pi_i$ be the projection of $\Delta \times \Po \times \Po$ to the i-th factor, $i=a,b,c$.

A main new feature in this example is

\begin{lemm}
$$\pi_* \sO_Z(S_b)|_{S_b} = \sO_B (-V_b-V_a) =\pi_a^* \sO_\Po (-1) \otimes \pi_b^* \sO_\Po (-1).$$
$$\pi_* \sO_Z(S_c)|_{S_c} = \sO_B (-V_c-V_a)=\pi_a^* \sO_\Po (-1) \otimes \pi_c^* \sO_\Po (-1) .$$
\end{lemm}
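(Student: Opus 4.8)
The plan is to reduce the statement to a computation of the normal bundle $N_{S_b\backslash Z}=\sO_Z(S_b)|_{S_b}$, exactly in the spirit of Lemma \ref{OWj}. Since $\pi|_{S_b}\colon S_b\xrightarrow{\cong}B$ is an isomorphism, $\pi_*\bl\sO_Z(S_b)|_{S_b}\br$ is just this normal bundle transported to $B$, so no $R^1$ or base-change input is needed here. Write $Y=Z_a|_\Delta\times\Po\times\Po$, let $b\colon Z\lra Y$ be the blowup along $W_b\coprod W_c$, and denote by $\cE_b$ the exceptional divisor over $W_b$. The first thing I would record is that the blowup along $W_c$ is irrelevant for $S_b$: since $S_{a,b}$ and $S_{a,c}$ are disjoint, the divisor $\bar S_b:=S_{a,b}\times\Po\times\Po$ is disjoint from $W_c=S_{a,c}\times\Po\times p$, so only the center $W_b\subset\bar S_b$ touches $\bar S_b$ and its proper transform $S_b$. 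By symmetry $S_c$ is handled by interchanging $b$ and $c$, so I treat $S_b$ only.

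Next I would extract the $V_a$-contribution, before blowing up. Because $\bar S_b=S_{a,b}\times\Po\times\Po$ is a product, its normal bundle in $Y$ is the pullback under the first projection of $N_{S_{a,b}\backslash Z_a|_\Delta}$. The key input from the $1$-tail geometry of \S\ref{1-tail} is that any section of $Z_a|_\Delta\to\Delta$ passing through the tail $C_a$ is the proper transform of a section of $\Delta\times E$ meeting the blown-up point $(p,e_0)$ with multiplicity one, hence has self-intersection $-1$ in $Z_a$; therefore $N_{S_{a,b}\backslash Z_a|_\Delta}\cong\sO_\Delta(-p)$. Under the identification $\bar S_b\cong B$ this reads $N_{\bar S_b\backslash Y}\cong\sO_B(-V_a)$, which is the source of the factor $\pi_a^*\sO_\Po(-1)$.

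The remaining $V_b$-contribution comes from the blowup. Since $\bar S_b$ is smooth and contains the smooth center $W_b$ as a Cartier divisor (codimension one in $\bar S_b$), the proper transform satisfies $S_b=b^*\bar S_b-\cE_b$ with multiplicity one, and $b|_{S_b}\colon S_b\to\bar S_b$ is an isomorphism. Restricting $\sO_Z(S_b)=b^*\sO_Y(\bar S_b)\otimes\sO_Z(-\cE_b)$ to $S_b$ then gives
\[
N_{S_b\backslash Z}\;\cong\;N_{\bar S_b\backslash Y}\otimes\sO_Z(-\cE_b)|_{S_b}.
\]
I would check that $S_b$ meets $\cE_b$ transversally along a copy of $W_b$, so that $\sO_Z(-\cE_b)|_{S_b}\cong\sO_B(-V_b)$, using that $W_b\subset\bar S_b$ corresponds to $V_b\subset B$. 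Combining the two factors yields
\[
N_{S_b\backslash Z}\;\cong\;\sO_B(-V_a-V_b)=\pi_a^*\sO_\Po(-1)\otimes\pi_b^*\sO_\Po(-1),
\]
and pushing forward along the isomorphism $\pi|_{S_b}$ gives the claim.

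The conceptual heart, and what distinguishes this case from Lemma \ref{OWj}, is that the normal bundle now acquires two negative contributions reflecting the chain $C_o$–$C_a$–$C_b$: the $-V_b$ from the blowup that creates the tail $C_b$, and the $-V_a$ from the fact that the tail $C_a$ carrying $S_{a,b}$ exists only over $V_a$. Accordingly, the main technical obstacle I anticipate is the bookkeeping of multiplicities in the proper-transform formula—verifying $\operatorname{mult}_{W_b}\bar S_b=1$ and the transversality of $S_b\cap\cE_b$—together with confirming the genuine decoupling of the blowup along $W_c$; the self-intersection $-1$ of the section through the tail is then imported verbatim from \S\ref{1-tail}.
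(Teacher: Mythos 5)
Your proof is correct, but it takes a genuinely different route from the one the paper invokes. The paper's own proof is a one-liner: ``Both can be checked directly by calculating the degree on each coordinate direction,'' i.e.\ the intersection-number method of the first half of Lemma \ref{OWj} --- identify $S_b\cong B$ via $\pi$, restrict $N_{S_b\backslash Z}=\sO_Z(S_b)|_{S_b}$ to the coordinate curves of $B=\Delta\times\Po\times\Po$, and read off the multidegree $(-1,-1,0)$. You instead run the analogue of the \emph{second} argument of Lemma \ref{OWj} (``we may also argue without calculating the intersection numbers''): you decompose the construction into the product stage, where $N_{\bar S_b\backslash Y}=\pr_1^*N_{S_{a,b}\backslash Z_a|_\Delta}\cong\sO_B(-V_a)$ by the $1$-tail geometry of \S\ref{1-tail}, and the blowup stage, where $S_b=b^*\bar S_b-\cE_b$ with multiplicity one and $\sO_Z(-\cE_b)|_{S_b}\cong\sO_B(-V_b)$, using that $W_b$ is a smooth Cartier divisor inside the smooth divisor $\bar S_b$ (so $b|_{S_b}$ is an isomorphism); your observation that $W_c$ is disjoint from $\bar S_b$ supplies the needed decoupling of the two centers, and you implicitly use the intended reading $V_b=\Delta\times p\times\Po=\{t_b=0\}$, correcting an evident typo in the paper's definition of $V_b$. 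The trade-off is this: the paper's degree count is shorter, but on $B$ the factor $\sO_B(-V_a)=\pi_a^*\sO_\Delta(-p)$ is abstractly trivial (the first factor is a disc), so a computation of isomorphism type alone must still track the divisor $V_a$ in order to write the canonical map $\beta$, $(h_b,h_c)\mapsto t_a(t_bh_b+t_ch_c)$, used immediately after the lemma; your argument produces the divisorial form $\sO_B(-V_a-V_b)$ directly and explains geometrically where each twist comes from (the tail carrying $S_b$ exists only over $V_a$, while the node $b$ is created along $V_b$). Both arguments are sound.
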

\begin{proof}
Both can be checked  directly by calculating the degree on each coordinate direction.
\end{proof}

Let $\bar{p}=(p,e_0,p,p) \in B$. Then, almost identical arguments
as in the sections \S\S \ref{1-tail} and \ref{rtails} will lead to
a canonical homomorphism
\begin{equation*}\label{betaS}
\begin{CD} \sO_B (-V_b-V_a) \oplus \sO_B (-V_c-V_a)  @>{\beta}>> \sO_B \\
(h_a, h_b,h_c) @>>> (t_at_b) h_b+ (t_at_c) h_c =
t_a(t_bh_b+t_ch_c).
\end{CD}
\end{equation*}

Hence we have,
\begin{prop}
Up to isomorphism, we  have  a commutative diagram 
\begin{equation*}
\begin{CD}
0 \lra  \ker \beta  @>>> \sO_B (-V_b-V_a) \oplus \sO_B (-V_c-V_a)
 \\
@VVV @VVV\\ 0 \lra \pi_* \mathscr O_Z(S_b + S_c) @>{\alpha}>>
\pi_* \mathscr O_Z(S_b + S_c+D)
\end{CD}
\end{equation*}
\begin{equation*}
\begin{CD}
@>{\beta}>> \sO_B @>{\gamma}>>   \kk(\bar{p}) \lra 0 \\
&& @VVV  @VVV  \\
@>{\beta_S}>>  \sO_B @>>> R^1\pi_* \mathscr O_Z(S_b + S_c) \lra 0
\end{CD}
\end{equation*}
 where the first two downward arrows are inclusions, and the last
two are isomorphisms.  Moreover, we have
\begin{enumerate}
\item $\pi_*  \mathscr O_Z(S_b + S_c+D) = \sO_B  \oplus \sO_B
(-V_b-V_a) \oplus \sO_B (-V_c-V_a)$. \item $\pi_* \mathscr O_Z(S_b
+ S_c) \cong \ker \beta_S = \sO_B  \oplus \ker \beta $. \item
$\beta_S |_{\sO_B (-V_b-V_a) \oplus \sO_B (-V_c-V_a)} = \beta$.
\end{enumerate}
\end{prop}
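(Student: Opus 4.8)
The plan is to rerun the three-step argument of Proposition \ref{propRtail} almost verbatim; the one genuinely new phenomenon is that, because in the tree $o[a[b,c]]$ each of the nodes $b,c$ sits below the node $a$, smoothing the section $S_b$ (resp. $S_c$) into the core requires smoothing \emph{two} nodes at once, so the multiplier attached to each summand will be a product $t_at_b$ (resp. $t_at_c$) rather than a single linear form. First I would produce the bottom long exact sequence: pushing forward
$$0\to\sO_Z(S_b+S_c)\to\sO_Z(S_b+S_c+D)\to\sO_Z(S_b+S_c+D)|_D\to 0,$$
and using, as in \S\ref{1-tail} and \S\ref{rtails}, that $D$ is disjoint from $S_b,S_c$ so that $\sO_Z(S_b+S_c+D)|_D=N_{D\backslash Z}\cong\sO_D$ with $\pi|_D$ an isomorphism, gives $\pi_*(\sO_Z(S_b+S_c+D)|_D)=\sO_B$; combined with $R^1\pi_*\sO_Z(S_b+S_c+D)=0$ this is exactly the bottom row of the asserted diagram.

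Next I would prove (1) by adjoining the two sections one at a time. The sequences $0\to\sO_Z(D)\to\sO_Z(S_b+D)\to N_{S_b\backslash Z}\to 0$ and $0\to\sO_Z(S_b+D)\to\sO_Z(S_b+S_c+D)\to N_{S_c\backslash Z}\to 0$, pushed forward, become short exact sequences of locally free sheaves once the preceding Lemma is invoked to identify $\pi_*N_{S_b\backslash Z}=\sO_B(-V_b-V_a)$ and $\pi_*N_{S_c\backslash Z}=\sO_B(-V_c-V_a)$. I would split each by verifying the relevant $\Ext^1$ vanishes; on $B=\Delta\times\Po\times\Po$ the groups in question are $H^1(B,\sO_B(V_a+V_b))$, $H^1(B,\sO_B(V_a+V_c))$ and $H^1(B,\sO_B(V_c-V_b))$, and a Künneth computation over the three factors (using $\sO_B(V_a)\cong\sO_B$, $H^1(\Po,\sO(1))=0$, and $H^\ast(\Po,\sO(-1))=0$, the noncompact counterpart of the Serre-duality vanishing in Lemma \ref{lemmSplit}) kills all three. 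This yields $\pi_*\sO_Z(S_b+S_c+D)=\sO_B\oplus\sO_B(-V_b-V_a)\oplus\sO_B(-V_c-V_a)$, which is (1).

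The heart of the proof, and the step I expect to be the main obstacle, is pinning down $\beta_S$. Relative to the splitting it has the form $(h_0,h_b,h_c)\mapsto c_0h_0+c_bh_b+c_ch_c$ with $c_0\in H^0(\sO_B)$, $c_b\in H^0(\sO_B(V_a+V_b))$ and $c_c\in H^0(\sO_B(V_a+V_c))$. Comparing levels through the natural commutative ladder obtained by adjoining sections shows that $\beta_S$ restricted to $\sO_B(-V_b-V_a)\oplus\sO_B(-V_c-V_a)$ is, up to rescaling, the canonical map $\beta$ displayed just before the proposition, i.e. $c_b=t_at_b$ and $c_c=t_at_c$. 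To force $c_0=0$ I would use cohomology and base change: $\coker\beta_S\cong R^1\pi_*\sO_Z(S_b+S_c)$, and by the Riemann--Roch principle recorded in \S2 this sheaf is nonzero exactly where the divisor $S_b+S_c$ misses the core, so $\beta_S$ cannot be surjective and the unit part $c_0$ must vanish. Unlike the $r$-tail case, this cokernel is \emph{not} a skyscraper at $\bar p$ but rather $\sO_B/(t_at_b,t_at_c)$, the structure sheaf of the reducible, positive-dimensional locus $V_a\cup(V_b\cap V_c)$ on which $S_b$ and $S_c$ simultaneously miss the core; carrying out the support and base-change bookkeeping against this degeneracy locus, rather than a single point, is the delicate part.

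With $\beta_S$ thus identified, (3) is immediate, and (2) follows from integrality of $B$: since $t_a$ is a nonzerodivisor, $t_a(t_bh_b+t_ch_c)=0$ forces $t_bh_b+t_ch_c=0$, whence $\ker\beta_S=\sO_B\oplus\ker\beta$ and $\pi_*\sO_Z(S_b+S_c)\cong\sO_B\oplus\ker\beta$. Finally the asserted commutative diagram is assembled by taking the top row to be the canonical sequence attached to $\beta$ (the analog of (\ref{CoreSequencer}) in this setting), the first two vertical maps to be the summand inclusion $\sO_B(-V_b-V_a)\oplus\sO_B(-V_c-V_a)\hookrightarrow\pi_*\sO_Z(S_b+S_c+D)$ and the identity on $\sO_B$, and the last two vertical maps to be the resulting isomorphisms of kernels and cokernels.
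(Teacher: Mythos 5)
Your proposal takes exactly the route the paper intends: the paper's entire proof of this proposition is the sentence ``Parallel to the proofs in \S\S \ref{1-tail} and \ref{rtails},'' and your three steps --- pushing forward the sequence twisted by $D$, splitting $\pi_*\sO_Z(S_b+S_c+D)$ by an $\Ext^1$-vanishing as in Lemma \ref{lemmSplit}, then pinning down $\beta_S$ by support and base-change considerations --- are precisely that argument transplanted to the tree $o[a[b,c]]$, with the correct new multipliers $t_at_b$ and $t_at_c$. Your replacement of the Serre-duality vanishing by a K\"unneth computation is the right fix for the non-compact base $B=\Delta\times\Po\times\Po$, and your list of the three relevant $\Ext^1$ groups is accurate. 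One caveat: since ${\bf m}=(1,1)$ is already the base case here, there is no ``ladder'' in ${\bf m}$ to compare against as in Proposition \ref{propRtail}; the identifications $c_b=t_at_b$, $c_c=t_at_c$ (up to units) and $c_0=0$ must all come out of the support/base-change analysis (or from restricting the family to slices $\{t_a^0\}\times\Po\times\Po$ and $\Delta\times\{(q_b,q_c)\}$, where Propositions \ref{propRtail} and \ref{prop1tail} apply directly), which is indeed what you flag as the crux.

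The point where your write-up is actually more careful than the statement itself deserves emphasis: you observe that $\coker\beta=\sO_B/(t_at_b,t_at_c)$ is the structure sheaf of the positive-dimensional locus $V_a\cup(V_b\cap V_c)$, not the skyscraper $\kk(\bar{p})$. This is correct: by cohomology and base change, $R^1\pi_*\sO_Z(S_b+S_c)$ has one-dimensional fibers at every point over which the core is ghost for $\sO_Z(S_b+S_c)$, i.e.\ on all of $V_a\cup(V_b\cap V_c)$, including points with $t_a\neq 0$. Consequently the top row of the displayed diagram, as printed, is not exact at $\sO_B$, and the last vertical arrow cannot be an isomorphism onto $R^1$ unless the term $\kk(\bar{p})$ is replaced by $\coker\beta=\sO_B/(t_at_b,t_at_c)$; the analogous correction applies to the term $\sO_V|_W$, $W=\cap_i V_i$, in Proposition \ref{theoModuli}. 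So your proof establishes the (corrected) proposition, and this support computation is also what completes your argument for $c_0=0$: mere non-surjectivity of $\beta_S$ only shows $c_0$ is not a unit, but a nonzero $c_0$, being a function of $t_a$ alone, would make the cokernel vanish at general points of $V_b\cap V_c$, contradicting the support statement. The paper's one-line proof never confronts any of this; your version does, and correctly.
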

\begin{proof}
Parallel to to the proofs in \S\S \ref{1-tail} and \ref{rtails}.
\end{proof}

\begin{say}
In the next two subsections \S\S \ref{generalCase-r} and
\ref{generalStrata}, we will give interpretations of the main
results of \cite{HL08} on the direct image sheaves in the spirit
of the examples that the reader has already been through. We refer
the proofs to \cite{HL08}.
\end{say}

\subsection{The direct image sheaf on  the moduli space:  simple case}
\label{generalCase-r}

\begin{say} Let $\pi: \cX \lra V$ be a flat family of nodal genus
1 curves. Let $0 \in V$ be a fixed point with the fiber
$$\cX_0 = \bigcup_{i=0}^r C_i$$ such that
$C_0$ is a nodal elliptic curve, $C_i$ are smooth rational curves
attached to $C_0$ at distinct smooth points.
\end{say}

\begin{say}
We will first make sense of the subscheme $V_i\sub V$ that are
locus of the $i$-th tail not smoothed ($1 \le i \le r$). We let
$p: \cX\to\cY$ be the stabilization\footnote{In this case, this
means that the morphism $p$ contracts any rational components that
has fewer than three nodes.}, let $\cE\sub\cX$ be the exceptional
divisors, let $\cE_i$ be the irreducible component of $\cE$  that
contains $C_i\sub\cX_0$. We define
$$V_i=\pi(\cE_i).
$$
\end{say}

\begin{lemm} The subset $V_i\sub V$ is canonically a closed subscheme whose
ideal sheaf $\sI_{V_i\sub V}$ is generated by a regular function
$t_i\in\Gamma(\sO_V)$.
\end{lemm}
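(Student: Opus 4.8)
The plan is to reduce the statement to an \'etale-local computation around the single node $q_i$ at which the tail $C_i$ meets the core $C_0$, and there to invoke the standard local normal form of a flat family of nodal curves. Since both assertions---that $V_i$ carries a canonical closed subscheme structure, and that its ideal sheaf is principal---are local on $V$, I may replace $V$ by a small neighbourhood of $0$ and $\cX$ by an \'etale neighbourhood of $q_i$.

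First I would record the local form of the family. Because $q_i$ is a node of the fibre $\cX_0$ and $\pi$ is flat with nodal fibres, deformation theory of the node (the versal deformation $xy=t$ over $\Spec\kk[[t]]$, together with the fact that every nodal smoothing is a pullback of it) gives, after henselisation or formal completion of $V$ at $0$, an isomorphism
\[
\cX \;\cong\; \Spec\bigl(\sO_V[x,y]/(xy-t_i)\bigr)
\]
near $q_i$, for some $t_i$ lying in the maximal ideal $\fm_0$ of $\sO_{V,0}$ (it vanishes at $0$ precisely because $\cX_0$ is singular at $q_i$). Two such presentations of the same node differ by an automorphism of the local model, which multiplies $t_i$ by a unit; hence the ideal $(t_i)$ is independent of all choices and is therefore canonical. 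This already furnishes a canonical principal ideal, and $\Spec\sO_V/(t_i)$ is the natural candidate for $V_i$.

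Next I would identify this candidate with $\pi(\cE_i)$ and check the scheme structures agree. Set-theoretically, the fibre $\cX_v$ is singular at the specialisation of $q_i$ exactly when $t_i(v)=0$, and the tail $C_i$ survives in $\cX_v$---so that $v$ lies in $\pi(\cE_i)$---precisely on this locus, since smoothing $q_i$ (i.e. $t_i(v)\neq 0$) absorbs $C_i$ into a smooth branch. Thus $\pi(\cE_i)=\{t_i=0\}$ as sets. To match the scheme structure it is cleanest to pass through the stabilisation $p\colon\cX\to\cY$: near $\Sigma_i:=p(C_i)$ the family $\cY\to V$ is smooth of relative dimension one with $\Sigma_i$ a section, and $\cX$ is the blow-up of $\cY$ along the codimension-two centre $Z_i=\Sigma_i\cap\pi^{-1}(V_i)$, whose exceptional divisor is $\cE_i$. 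Since $\pi$ restricts to an isomorphism on $\Sigma_i$, one gets $Z_i\cong V_i$ and $\pi(\cE_i)=V_i$ as schemes, carrying the principal structure $(t_i)$ read off from the local model. This also exhibits \S\ref{1-tail}--\S\ref{rtails} as the model case, where $t_i$ is exactly the coordinate cutting out $\bar p$.

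I expect the main obstacle to be the scheme-theoretic matching rather than the set-theoretic one: one must verify that the canonical node parameter $t_i$ cuts out $V_i$ with its precise structure---not a proper power and not merely the radical---which is where the blow-up description and a computation of the conormal of $\cE_i$ are needed, and that the locally defined $t_i$ genuinely descends to, and glues into, a single regular function in $\Gamma(\sO_V)$. The latter descent is harmless here only because the statement is local and the divisor $V_i$ is principal on such a neighbourhood of $0$.
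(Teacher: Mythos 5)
First, a caveat on the comparison itself: the paper never proves this lemma --- it is stated bare, and the proofs for this part of the note are explicitly deferred to \cite{HL08} --- so there is no in-paper argument to measure you against. Your overall strategy (local normal form $xy=t_i$ at the attaching node, canonicity of the ideal $(t_i)$ up to units, identification of $\{t_i=0\}$ with the locus where the tail survives) is certainly the intended one; this is how the $t_i$ are used in the rest of the section, where the paper says outright that they are ``the smoothing parameter of the corresponding node.'' But two of your steps have genuine problems.

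The first is that your descent step is circular, and the point it glosses over is real. The versal deformation of the node produces $t_i$ only in the completion or henselization $\hat{\sO}_{V,0}$, and your justification for obtaining an honest $t_i\in\Gamma(\sO_V)$ is that ``the divisor $V_i$ is principal on such a neighbourhood of $0$'' --- which is exactly what is being proved. Over a Zariski-local base the single node's smoothing ideal genuinely need not descend: take $V=\A^2_{s,u}$, pass to the \'etale double cover $w^2=1+s$, attach two tails to $E\times V$ along the conjugate sections $z=\pm w$, letting the first persist over $\{u=sw\}$ and the second over $\{u=-sw\}$, and descend the (Galois-stable) configuration back to $V$. Then $\cE_1=\cE_2$ is irreducible, no regular function on any Zariski neighbourhood of $0$ cuts out the single branch $\{u=s\sqrt{1+s}\}$ (an algebraic curve containing one branch of the irreducible nodal cubic $u^2=s^2(1+s)$ contains all of it), and the function the lemma wants is the norm $u^2-s^2(1+s)$, not a node parameter. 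The repair is to use the paper's standing convention (\S 2.1) that one may replace $V$ by an \'etale neighbourhood of $0$: by Artin approximation the local model then holds with $t_i$ regular on that neighbourhood, and the monodromy phenomenon above is killed.

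The second problem is that your scheme-theoretic matching aims at the wrong target, so the unproven blow-up claim it rests on is not just a gap but a detour into a false statement. You propose to verify that $t_i$ cuts out the scheme-theoretic image of the reduced divisor $\cE_i$, ``not a proper power.'' But pull back the $1$-tail family of \S 2.2 under $t=s^2$: this is a flat family of nodal genus-one curves satisfying the hypotheses, the smoothing parameter is $s^2$, and the canonical $V_1$ is the \emph{non-reduced} subscheme $\{s^2=0\}$ --- this is the structure for which the later assertions hold (e.g.\ $\pi_\ast N_{S_1\backslash\cX}$ becomes $\sO_{\Po}(-2)=\sO(-V_1)$ after this base change, and $\beta$ becomes multiplication by $s^2$) --- whereas the scheme-theoretic image of the reduced $\cE_1$ is $\{s=0\}$. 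So ``not a proper power'' is not a condition to verify; it can fail, and when it fails the non-reduced structure is the correct one. The clean formulation is that the smoothing ideal \emph{defines} the canonical subscheme structure (well-defined because, as you correctly note, it is intrinsic up to units), and the only thing to check against $\pi(\cE_i)$ is the set-theoretic equality, which your absorption argument for the smoothed node already gives.
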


So we can write $V_i =\{t_i=0\}$.  For every $1 \le i \le r$, let
$\cX_i$ be the restriction of the total family to $V_i$.  Choose a
section $S_i$ over $V$ that meets $\cE_i$ ($1 \le i \le r$), and a
generic section $D$ that misses all $\cE_i$. All these can be done
by shrinking $V$ if necessary.

\begin{say}
Again we will consider sheaves of the form
 $$\pi_* \sO_\cX (\bfmS), \quad {\bf m} \ge {\bf 1}.$$
We have a short exact sequence
$$0 \lra  \sO_\cX(\bfmS)  \lra  \sO_\cX (\bfmS +D)
\lra    \sO_\cX(\bfmS +D)|_{D} \lra 0 $$ and  a long exact
sequence
\begin{equation}\label{mainSequence-R-tail}
\begin{CD}
0 \lra \pi_* \sO_\cX(\bfmS) \lra \pi_*
 \sO_cC(\bfmS +D) \lra \\ \pi_*    \sO_\cX(\bfmS +D)|_{D}
  \lra
R^1\pi_* \sO_\cX(\bfmS) \lra 0.
\end{CD}
\end{equation}
\end{say}

\begin{say}
Since $V$ is affine, we have an isomorphism $\pi_*
\sO_V(\bfmS+D)|_D (V) \cong \sO_V$. We fix such an isomorphism.
As in \S \ref{rtails},
consider the  short exact sequence:
$$0 \lra  \mathscr O_\cX(\bfmS+D) \lra  \mathscr O_\cX (\bfmS+S_j+ D)
\lra   \mathscr O_\cX(\bfmS +S_j+ D)|_S \lra 0.$$ Because
$\sO_\cX(\bfmS+S_j+D)|_{S_j}= \sO_\cX (S_j)^{m_j+1}|_{S_j}$,  we
have $\pi_* \sO_\cX(\bfmS+S_j+D)|_{S_j} = \sO_V(-(m_i+1)V_j)$,
hence we obtain a short  exact sequence of locally free sheaves
\begin{equation}\label{DDr-tails}
0 \lra \pi_* \sO_\cX (\bfmS+D) \lra \pi_*  \mathscr
O_\cX(\bfmS+S_j+D) \lra  \sO_V (-(m_j+1) V_j) \lra 0
\end{equation}
because $R^1 \pi_* \mathscr O_Z(mS+D)=0$.
Then by arguing that $\Ext^1=0$, we obtain that the sequence splits; then by induction, we get
\end{say}

\begin{lemm}
\label{lemmSplit-rtails}
 The sequence (\ref{DDr-tails}) splits and consequently
$$\pi_*  \sO_\cX (\bfmS+D) = \sO_V \oplus \bigoplus_{i=1}^r \bigoplus_{k=1}^{m_i} \sO_V (-k V_j). $$
\end{lemm}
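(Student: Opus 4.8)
The plan is to run the same induction as in Lemma~\ref{lemmSplit}, replacing the Serre-duality vanishing that produced the splitting over the projective base $(\Po)^r$ by the fact that $V$ is affine, which makes the relevant $\Ext^1$ vanish for free.

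I would first dispose of the base case $m=0$. Here the long exact sequence (\ref{mainSequence-R-tail}) degenerates exactly as in the model computations (\ref{m=0}) and (\ref{bfm=0}): since the fibers of $\pi$ are connected of arithmetic genus one, $\pi_*\sO_\cX=\sO_V$, and the canonical inclusion $\sO_\cX\hookrightarrow\sO_\cX(D)$ induces an isomorphism $\sO_V=\pi_*\sO_\cX\xrightarrow{\ \sim\ }\pi_*\sO_\cX(D)$. Thus $\pi_*\sO_\cX(D)=\sO_V$, which is the asserted formula for $m=0$.

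For the inductive step, suppose the formula holds for a fixed ${\bf m}$ with $\sum_i m_i=m$, and pass to the multidegree obtained by raising the $j$-th component by one. The sequence (\ref{DDr-tails}) then reads
$$0\lra \pi_*\sO_\cX(\bfmS+D)\lra \pi_*\sO_\cX(\bfmS+S_j+D)\lra \sO_V(-(m_j+1)V_j)\lra 0,$$
with all three terms locally free: for the two outer terms this follows from $R^1\pi_*=0$ together with cohomology and base change, as already invoked for (\ref{DDr-tails}), and the quotient is the line bundle identified in the computation preceding (\ref{DDr-tails}). The heart of the matter is that this sequence splits. Since $V$ is affine, the quotient $\sO_V(-(m_j+1)V_j)$ is an invertible---hence finitely generated projective---$\sO_V$-module, so the sheaf $\mathcal{E}xt^i\bl\sO_V(-(m_j+1)V_j),-\br=0$ for $i>0$, while $H^1(V,-)$ vanishes on every coherent sheaf because $V$ is affine; combining these via the local-to-global spectral sequence gives $\Ext^1_{\sO_V}\bl\sO_V(-(m_j+1)V_j),\,\pi_*\sO_\cX(\bfmS+D)\br=0$. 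Equivalently, any surjection of $\sO_V$-modules onto a projective module splits. Feeding the inductive hypothesis into the split sequence adjoins the single summand $\sO_V(-(m_j+1)V_j)$, i.e.\ the $k=m_j+1$ term for the $j$-th tail, which is exactly the formula for the new multidegree.

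Finally, every ${\bf m}$ is reached from ${\bf 0}$ by raising one coordinate at a time, so a single chain of splittings---filling up the first tail, then the second, and so on---produces an isomorphism of the stated type; since only the isomorphism class of the direct sum is claimed, no compatibility among different insertion orders is needed. I do not anticipate a real obstacle: in contrast with Lemma~\ref{lemmSplit}, where Serre duality and an explicit $\Ext$ computation were required, affineness of $V$ trivializes the extension group, and the only steps that demand care are the (routine) local freeness of each $\pi_*$ via $R^1\pi_*=0$ and the correct identification of the quotient as $\sO_V(-(m_j+1)V_j)$.
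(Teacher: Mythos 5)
Your proposal is correct and follows essentially the same route as the paper: the paper's argument is precisely to take the short exact sequence (\ref{DDr-tails}) of locally free sheaves (using $R^1\pi_*=0$), observe that $\Ext^1=0$ forces it to split, and then induct on $\sum_i m_i$. The paper leaves the $\Ext^1$ vanishing unjustified, and your observation that affineness of $V$ supplies it for free (in contrast to the Serre duality computation needed over $(\Po)^r$ in Lemma \ref{lemmSplit}) is exactly the intended reason.
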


\begin{say}
Let $$\sV_1 = \bigoplus_{i=1}^r \sO_V (-V_i).$$  Then we have a
canonical  map $\beta$,
 \begin{equation}\label{betarmoduli-rtails}
\begin{CD}
\sV_1  @>>> \sO_V \\
(h_1, \cdots, h_r)  @>>> \sum_i t_i h_i
\end{CD}
\end{equation}
which induces a canonical exact sequence
 \begin{equation}\label{CoreSequencermoduli-rtails}
\begin{CD}
0 \lra  \ker \beta  @>>> \sV_1  @>{\beta}>> \sO_V  @>{\gamma}>>
\sO_V |_W  \lra 0
\end{CD}
\end{equation}
where $W=\cap V_i$.
\end{say}

Set $$\sV_{{\bf m},0} =  \sO_V  \oplus \bigoplus_{ j=1}^r
\bigoplus_{k=2}^{m_j} \sO_V (-kV_j).$$ We have

\begin{prop}
\label{propModuli-rtails}
 Assume
 that $m_j \ne 0$ for all $1 \le j \le r$.  Then up to isomorphism, we  have  a commutative diagram
\begin{equation}
\label{mianRtailmoduli}
\begin{CD}
0 \lra  \ker \beta  @>>> \sV_1  \\
@VVV @VVV \\ 0 \lra \pi_* \mathscr O_\cX (\bfmS) @>{\alpha_{\bf
m}}>> \pi_* \mathscr O_\cX (\bfmS+D)
\end{CD}
\end{equation}
\begin{equation*}
\begin{CD}
@>{\beta}>> \sO_V  @>{\gamma}>>  \sO_V |_W  \lra 0 \\
&&@VVV  @VVV  \\
@>{\beta_{\bf m}}>>  \sO_V @>{\gamma_{\bf m}}>>    R^1\pi_*
\mathscr O_\cX (\bfmS) \lra 0
\end{CD}
\end{equation*}
where the first two downward arrows are inclusions, and the last
two are isomorphisms.  Moreover, we have
\begin{enumerate}
\item $\pi_*  \mathscr O_\cX (\bfmS+D) = \sV_{{\bf m},0}  \oplus
\sV_1$. \item $\pi_* \mathscr O_\cX (\bfmS) \cong \ker \beta_{\bf
m} = \sV_{{\bf m},0}  \oplus \ker \beta $. \item $\beta_{\bf m}
|_{\sV_1} = \beta$.
\end{enumerate}
\end{prop}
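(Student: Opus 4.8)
The plan is to follow the proof of Proposition \ref{propRtail} almost line by line, replacing the homogeneous coordinate $t_i$ on the $i$-th factor of $(\Po)^r$ by the regular function $t_i\in\Gamma(\sO_V)$ cutting out $V_i=\{t_i=0\}$, and replacing $\pi_j^*\sO_\Po(-1)$ by the line bundle $\sO_V(-V_i)$. Item (1) is exactly Lemma \ref{lemmSplit-rtails}, so after fixing the splitting $\pi_*\sO_\cX(\bfmS+D)=\sV_{{\bf m},0}\oplus\sV_1$ the whole content is to determine the comparison map $\beta_{\bf m}$ in this splitting and to read off its kernel. I would induct on $m=\sum_j m_j$, with base case ${\bf m}={\bf 1}$, just as before, so that the $m=0$ sequence (\ref{bfm=0}) and the canonical core sequence (\ref{CoreSequencermoduli-rtails}) play the roles they did in the projective model.

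First I would express $\beta_{\bf m}$ through its restrictions to the summands: it sends a local section $(h_0,(h_j)_j,(h_{jk})_{k\ge 2})$ to $c_0h_0+\sum_j c_jh_j+\sum_j\sum_{k\ge 2}c_{jk}h_{jk}$, where $c_0\in\Hom(\sO_V,\sO_V)$, $c_j\in\Hom(\sO_V(-V_j),\sO_V)$ and $c_{jk}\in\Hom(\sO_V(-kV_j),\sO_V)$ are structure maps. Two facts must then be extracted. First, the generator of the $\sO_V$-summand of $\sV_{{\bf m},0}$ is the image of $1$ under $\sO_\cX\hookrightarrow\sO_\cX(\bfmS+D)$, i.e.\ a section of $\sO_\cX(\bfmS+D)$ vanishing along $D$; since $\beta_{\bf m}$ is $\pi_*$ of restriction to $D$, it kills this generator and $c_0=0$ (this is already the vanishing of the middle map in (\ref{bfm=0})). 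Second, by cohomology-and-base-change $R^1\pi_*\sO_\cX(\bfmS)\otimes\kk(v)=H^1(\cX_v,\sO_\cX(\bfmS)|_{\cX_v})$, which by the Riemann--Roch discussion preceding \S\ref{1-tail} is nonzero exactly when the sections all miss the core, that is exactly when $v\in W=\cap_j V_j$. Since $\coker\beta_{\bf m}\cong R^1\pi_*\sO_\cX(\bfmS)$, the image ideal of $\beta_{\bf m}$ must have support $W$; comparing with (\ref{CoreSequencermoduli-rtails}) this forces the $c_j$ to generate the ideal $(t_1,\dots,t_r)$ of $W$ and $t_j\mid c_{jk}$, so that $c_{jk}=t_ja_{jk}$ and, after an automorphism of $\sV_1$, $\beta_{\bf m}|_{\sV_1}=\beta$.

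With $\beta_{\bf m}$ normalized as $(h_0,(h_j),(h_{jk}))\mapsto\sum_j t_j\bigl(h_j+\sum_{k\ge 2}a_{jk}h_{jk}\bigr)$, I would compute the kernel by the same splitting trick as in Proposition \ref{propRtail}: write each element as the sum of one piece whose $\sV_1$-coordinates are $-\sum_k a_{jk}h_{jk}$ (the graph of a map $\sV_{{\bf m},0}\to\sV_1$, hence a copy of $\sV_{{\bf m},0}$) and one piece lying in $\ker\beta\subset\sV_1$. After this new splitting of $\pi_*\sO_\cX(\bfmS+D)$ I obtain $\pi_*\sO_\cX(\bfmS)\cong\ker\beta_{\bf m}=\sV_{{\bf m},0}\oplus\ker\beta$, which is (2), and simultaneously (3). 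The commutative diagram (\ref{mianRtailmoduli}) then follows by comparing this short exact sequence with (\ref{CoreSequencermoduli-rtails}); the two right-hand vertical maps are isomorphisms by the five lemma together with the identification of both cokernels with $R^1\pi_*\sO_\cX(\bfmS)$. The inductive step, raising a single $m_j$ by one, uses the natural comparison square relating the sequences for $\bfmS$ and $\bfmS+S_j$, exactly as in the passage from $m=1$ to general $m$ in Proposition \ref{prop1tail}, to keep the normalization $\beta_{\bf m}|_{\sV_1}=\beta$ compatible across the induction.

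The step I expect to be the main obstacle is precisely pinning down $c_0=0$ and the divisibility $t_j\mid c_{jk}$. In the projective model of Proposition \ref{propRtail} these were immediate because $\Hom(\sO,\sO)=\kk$ and $\Hom(\pi_j^*\sO(-k),\sO)$ is a space of binary forms, so a support condition at the point $\bar p$ read off the coefficients directly. Over the abstract affine base $V$ the spaces $\Hom(\sO_V(-kV_j),\sO_V)=\Gamma(\sO_V(kV_j))$ are larger and $c_0$ need not be a scalar, so the bare statement that $\coker\beta_{\bf m}$ is supported on $W$ no longer forces $c_0=0$ or $t_j\mid c_{jk}$ on the nose. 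The honest route is to localize: away from $W$, along the generic point of each $V_j$, the family is a one-tail family and Proposition \ref{prop1tail} applies verbatim, forcing $\beta$ to be multiplication by the uniformizer $t_j$ and hence $c_j$ a unit times $t_j$ and $t_j\mid c_{jk}$ there; torsion-freeness of the coherent sheaves involved then propagates these relations across all of $V$. This local-to-global reduction, rather than the formal homological bookkeeping, is the real content of the argument in the general setting.
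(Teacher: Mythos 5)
Your strategy and your diagnosis of where the difficulty lies are both sensible, and parts of your argument are genuinely correct: the observation that the $\sO_V$-summand of $\pi_*\sO_\cX(\bfmS+D)$ is generated by the tautological section vanishing along $D$, so that $c_0=0$, is clean and valid, as is the fiberwise identification of $R^1\pi_*\sO_\cX(\bfmS)$ via cohomology and base change. But note first that the paper does not prove this proposition this way at all: its proof is a citation to \cite{HL08}, with a footnote explicitly saying it would be nice to find an elementary proof. So you are attempting precisely what the author declined to do, and your repair of the key step fails.

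The failure is concrete. At the generic point $\eta_j$ of $V_j$ (say $r\ge 2$), the fiber is an elliptic curve with the single tail $C_j$, but the remaining sections $S_i$, $i\ne j$, now meet the \emph{core}, and $m_i>0$; by the Riemann--Roch observation in \S 2.1, $H^1$ of such a fiber vanishes, so $\coker\beta_{\bf m}\cong R^1\pi_*\sO_\cX(\bfmS)$ is zero at $\eta_j$. Localizing there therefore tells you only that the image ideal of $\beta_{\bf m}$ is the unit ideal at $\eta_j$ --- which the sought normal form also satisfies, since $t_i$ is a unit at $\eta_j$ for $i\ne j$ --- and it yields no divisibility constraint on $c_j$ or $c_{jk}$ whatsoever. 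The situation at $\eta_j$ is simply not that of Proposition \ref{prop1tail} (there the unique section misses the core and the cokernel is a skyscraper); moreover that proposition was proved only for the explicit blowup of $\Po\times E$, using the fact that maps of line bundles on $\Po$ are binary forms, so ``applies verbatim'' over an abstract DVR also hides a circularity. What is actually needed, and what your argument never establishes, is that $R^1\pi_*\sO_\cX(\bfmS)\cong\sO_V|_W$ \emph{as a sheaf}, i.e.\ that the image ideal of $\beta_{\bf m}$ equals $(t_1,\dots,t_r)$ exactly. An ideal such as $(t_1^2,t_2,\dots,t_r)$ has the same support $W$, the same one-dimensional fibers, and even restricts to the maximal ideal on a generic curve transverse to $W$; so no support, base-change, or generic-restriction argument can exclude it, and your ``torsion-freeness propagation'' does not address it. Excluding it --- in effect a length-one computation transverse to $W$, coming from the local equation $xy=t_j$ of the family at the node joining $C_j$ to the core --- is exactly the non-formal content for which the paper defers to \cite{HL08}.
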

\begin{proof}
It follows from the general proof in \cite{HL08}\footnote{It would
be nice to find an elementary proof of this proposition}.
\end{proof}

\subsection{The direct image sheaf around deeper strata:  general case}
\label{generalStrata}

\begin{say}
We let $\cX \lra V$ be a flat family of nodal curves of arithmetic
genus one. We let $0\in V$ be a fixed point with the fiber
$$\cX_0 = \bigcup_{i=0}^r C_i$$ such that
$C_0$ is a nodal elliptic curve, $C_i$ are connected trees of
rational curves attached to $C_0$ at distinct smooth points.
\end{say}

\begin{say}
 For
each $1 \le i \le r$, let $C_i^0$ be the  irreducible component of
$C_i$ that meets $C_0$. Note that as a nodal elliptic curve, $C_0$
contains either a smooth elliptic curve or a unique loop of
rational curves. This is the core $C_e$ of $\cX_0$.
\end{say}

\begin{say} We let $\sL$ be an invertible sheaf on $\cX$ whose
restriction to $C_0$ is trivial and whose restriction to $C_i^0$
are effective and with positive degree ($1 \le i \le r$). Thus,
$C_0$ is the largest subcurve of $\cX_0$ containing the core $C_e$
such that $\sL|_{C_0}$ is trivial.
\end{say}

\begin{say}
As in \S \ref{generalCase-r}, by shrinking $V$ if necessary, we
can choose sections $S_i$, $1 \le i \le r$, and $D$ such that
$\sL$ is of the form $\mathscr O_\cX (\bfmS)$ for some ${\bf m }
\ge {\bf 1}$.
\end{say}

\begin{say}
 For each $C_i$, $1 \le i \le r$,
let $$C_{i1}, \cdots, C_{in_i}$$ be the list of all irreducible
(ghost) rational curves through which $C_i$ is  attached to $C_e$.
Observe that some (ghost) rational curves   $C_{ih}$ are allowed
to repeat as $C_{jk}$ for some $j \ne i$.
\end{say}

\begin{say}
We have local variables  $t_i$, $t_{ij}$,  $1 \le i \le r$, $1 \le
j \le n_i$,
 each of which is the smoothing parameter of the  corresponding node.
Let $V_{i} = \{t_{i}=0\}$ and   $V_{ij} = \{t_{ij}=0\}$, $1 \le i
\le r$, $1 \le j \le n_i$.

Let $$\sV_1: = \sV_{\gamma,1} = \bigoplus_{i=1}^r \sO_V (-V_{i} -
\sum_{j=1}^{n_i} V_{{ij}}).$$
\end{say}

\begin{say}
The reason that  $\sO_V (-V_{i} - \sum_{j=1}^{n_i} V_{{ij}})$
occurs is because of the sequence
$$0 \lra  \mathscr O_\cX(\bfmS+D) \lra  \mathscr O_\cX(\bfmS+S_j+ D)
\lra   \mathscr O_\cX(\bfmS +S_j+ D)|_S \lra 0$$ and $\pi_*
\sO_Z(\bfmS+S_i+D)|_{S_i} = \sO_Z (S_i)|_{S_i}= \sO_V (-V_{i} -
\sum_{j=1}^{n_i} V_{{ij}})$.
\end{say}

\begin{say}
Then we have a canonical  map $\beta$,
 \begin{equation}\label{betaModuli}
\begin{CD}
\sV_1  @>>> \sO_V \\
(h_1, \cdots, h_r)  @>>> \sum_{i=1}^r t_i \prod_{j=1}^{n_i}
t_{{ij}} h_i
\end{CD}
\end{equation}
which induces a canonical exact sequence
 \begin{equation}\label{CoreSequencerModuli}
\begin{CD}
0 \lra  \ker \beta  @>>> \sV_1  @>{\beta}>> \sO_V  @>>> \sO_V |_W
\lra 0
\end{CD}
\end{equation}
where $W = \cap_{i=1}^r V_i$.
\end{say}

\begin{say}
Set $$\sV_{{\bf m},0} := \sV_{\gamma, {\bf m},0} = \sO_V  \oplus
\bigoplus_{ i=1}^r \bigoplus_{k=2}^{m_i} \sO_V (-k(V_{i}
+\sum_{j=1}^{n_i} V_{{ij}})).$$
\end{say}

\begin{prop}
\label{theoModuli}
 Assume
 that $m_j \ne 0$ for all $1 \le j \le r$.  Then up to isomorphism, we  have  a commutative
 diagram
\begin{equation}
\label{Moduli}
\begin{CD}
0 \lra  \ker \beta  @>>> \sV_1  \\
@VVV @VVV \\ 0 \lra \pi_* \mathscr O_\cX (\bfmS) @>{\alpha_{\bf
m}}>> \pi_* \mathscr O_\cX (\bfmS+D)
\end{CD}
\end{equation}
\begin{equation*}
\begin{CD}
@>{\beta}>> \sO_V  @>>>  \sO_V |_W  \lra 0 \\
&&@VVV  @VVV  \\
 @>{\beta_{\bf m}}>>  \sO_V
@>>> R^1\pi_* \mathscr O_\cX (\bfmS)    \lra 0
\end{CD}
\end{equation*}
where the first two downward arrows are inclusions, and the last
two are isomorphisms.  Moreover, we have
\begin{enumerate}
\item $\pi_*  \mathscr O_\cX (\bfmS+D) = \sV_{{\bf m},0}  \oplus  \sV_1$.
\item $\pi_* \mathscr O_\cX (\bfmS) \cong \ker \beta_{\bf m} = \sV_{{\bf m},0}  \oplus \ker \beta $.
\item $\beta_{\bf m} |_{\sV_1} = \beta$.
\end{enumerate}
\end{prop}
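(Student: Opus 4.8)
The plan is to run the same two-stage induction used in Propositions \ref{prop1tail} and \ref{propRtail}, with the single new bookkeeping device that for each tail the \emph{effective smoothing coordinate} is the product
$$\tau_i := t_i \prod_{j=1}^{n_i} t_{ij}, \qquad 1 \le i \le r,$$
so that the identity recorded before (\ref{betaModuli}) reads $\sO_\cX(S_i)|_{S_i} = \sO_V(-V_i - \sum_{j=1}^{n_i} V_{ij})$, the bundle cut out by $\tau_i$. First I would establish statement (1), the splitting
$$\pi_* \mathscr O_\cX(\bfmS+D) = \sO_V \oplus \bigoplus_{i=1}^r \bigoplus_{k=1}^{m_i} \sO_V\bl -k(V_i + \sum_{j=1}^{n_i} V_{ij})\br.$$
Exactly as in Lemmas \ref{lemmSplit} and \ref{lemmSplit-rtails}, this is proved by induction on $m=\sum_i m_i$ from the short exact sequence displayed just before (\ref{betaModuli}): restricting $\mathscr O_\cX(\bfmS+S_i+D)$ to $S_i$ produces $\sO_V(-(m_i+1)(V_i + \sum_j V_{ij}))$, the higher image $R^1\pi_* \mathscr O_\cX(\bfmS+D)$ vanishes, and the resulting short exact sequence of locally free sheaves splits because a Serre-duality computation makes the relevant $\Ext^1_V$ vanish.

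With (1) in hand I would analyze $\beta_{\bf m}$ starting from the base case ${\bf m}={\bf 1}$. Using the splitting, write $\beta_{\bf 1}(h_0,h_1,\ldots,h_r)=c_0 h_0 + \sum_{i=1}^r c_i h_i$, where $c_0$ is a fixed scalar and $c_i$ is the regular function giving the $i$-th component map $\sO_V(-V_i-\sum_j V_{ij}) \to \sO_V$. The base change property identifies the fiber of $R^1\pi_* \mathscr O_\cX(\sum_i S_i)$ with $H^1$ of $\sL$ on the corresponding fiber, so its support is exactly $W=\cap_{i=1}^r V_i$; this forces $c_0=0$ and forces $\tau_i \mid c_i$. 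After the coordinate normalization of Proposition \ref{propRtail} we may take $c_i=\tau_i$, so $\beta_{\bf 1}$ coincides with the canonical map $\beta$ of (\ref{betaModuli}) and $\ker\beta_{\bf 1} = \ker\beta \oplus \sO_V$, giving (2) and (3) for ${\bf m}={\bf 1}$.

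For general ${\bf m} \ge {\bf 1}$ I would propagate this through the natural commutative ladder comparing the $\sum_i S_i$ long exact sequence with the $\bfmS$ one (as in the two stacked diagrams in the proof of Proposition \ref{prop1tail}), whose rightmost vertical arrow is an isomorphism; this yields $\beta_{\bf m}|_{\sV_1}=\beta$. Writing $\beta_{\bf m}$ in the full coordinates of (1), the same base-change argument gives $c_0=0$ and $\tau_i \mid c_{ik}$, so $c_{ik}=\tau_i a_{ik}$. Decomposing an arbitrary element as the sum of a term in $\sV_{{\bf m},0}$ whose $i$-th $\sV_1$-slot is replaced by $-\sum_k a_{ik}h_{ik}$ and a complementary term lying in $\sV_1$ (the re-splitting trick of Proposition \ref{propRtail}) identifies $\pi_* \mathscr O_\cX(\bfmS+D)=\sV_{{\bf m},0}\oplus\sV_1$ and $\ker\beta_{\bf m}=\sV_{{\bf m},0}\oplus\ker\beta$. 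Feeding this into the long exact sequence underlying (\ref{Moduli}) produces the asserted commutative diagram with the stated inclusions and isomorphisms, and the enumerated conclusions follow.

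The main obstacle is the normal-bundle identity $\sO_\cX(S_i)|_{S_i}=\sO_V(-V_i-\sum_{j=1}^{n_i}V_{ij})$: one must explain why the \emph{entire} product $t_i\prod_j t_{ij}$ along the chain of ghost rational curves $C_{i1},\ldots,C_{in_i}$ appears, rather than a single parameter as in the $r$-tail case. This is the genuinely new geometric input, where the tree structure of the tails enters, and it should be verified by a local degree computation on each coordinate direction in the spirit of Lemma \ref{OWj}. A secondary subtlety, relevant to computing $\ker\beta$ and the cokernel $\sO_V|_W$ in (\ref{CoreSequencerModuli}), is that ghost components may be \emph{shared}, i.e. some $C_{ih}$ coincides with some $C_{jk}$ with $i\neq j$, so the divisors $V_{ij}$ need not be independent; once the identity for $\sO_\cX(S_i)|_{S_i}$ is granted, however, the algebra of $\beta$ and the base-change divisibility argument go through verbatim.
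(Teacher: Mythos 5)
Your reduction bookkeeping via $\tau_i=t_i\prod_{j=1}^{n_i}t_{ij}$ is exactly the paper's: the paper's entire proof of this proposition is the substitution $V'=V_i+\sum_{j=1}^{n_i}V_{ij}$, reducing to the $r$-tail moduli case (Proposition \ref{propModuli-rtails}). But there the agreement ends, because the paper does \emph{not} prove Proposition \ref{propModuli-rtails} by rerunning the toy arguments of Propositions \ref{prop1tail} and \ref{propRtail}; it cites the general (deformation-theoretic) proof of \cite{HL08}, and attaches a footnote saying explicitly that an elementary proof of that proposition would be nice to have --- i.e.\ the argument you propose is precisely the one the author says is not available. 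The gap is in the step you summarize as ``the same base-change argument gives $c_0=0$ and $\tau_i\mid c_{ik}$,'' followed by the normalization $c_i=\tau_i$.

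In the toy cases those deductions rest on projective rigidity, not on base change alone: over $(\Po)^r$ the component $c_0\in\Hom(\sO_{(\Po)^r},\sO_{(\Po)^r})=\kk$ is a literal scalar, and $c_i$ is a pullback of a section of $\sO_{\Po}(1)$ from the $i$-th factor, i.e.\ a linear form $a_it_i+b_is_i$; only for such special maps does ``cokernel supported in $W$'' force $c_0=0$ and $t_i\mid c_i$, and only there is ``changing coordinates'' (of the $i$-th $\Po$) available to normalize $c_i=t_i$. Over the base of the present proposition, $V$ is affine/local, every summand $\sO_V(-k(V_i+\sum_j V_{ij}))$ is abstractly trivial, and the components of $\beta_{\bf m}$ are \emph{arbitrary} regular functions. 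What cohomology-and-base-change gives --- support of $R^1\pi_*\sO_\cX(\bfmS)$ equal to $W$ and one-dimensional fibers along $W$ --- does not pin down $\beta_{\bf m}$: for $r=2$ (all $n_i=0$) the coefficient vector $(c_0,c_1,c_2)=(t_1,t_1,t_2)$ meets both constraints with $c_0\neq 0$, and $(0,t_1,t_2^2)$ meets both constraints yet has cokernel $\sO_V/(t_1,t_2^2)\not\cong\sO_V|_W$, so it is not equivalent to the canonical $\beta$ under any re-splitting. Ruling out such maps requires genuinely finer information about the family near the central fiber --- the local analysis carried out in \cite{HL08} --- and this is exactly why the paper defers. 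The parts of your proposal that do work are the ones that are not at issue: the splitting (1) (even easier here, since $\Ext^1$ between locally free sheaves on affine $V$ vanishes), the ladder/re-splitting formalism, and your correct identification of the normal-bundle identity $\sO_\cX(S_i)|_{S_i}=\sO_V(-V_i-\sum_j V_{ij})$ and of shared ghost components as the new geometric inputs; none of these, however, closes the central gap of identifying $\beta_{\bf m}$.
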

\begin{proof}  Let $V' = V_{i} +\sum_{j=1}^{n_i} V_{{ij}}$. Then this reduces to the case of $r$-tails (Proposition
\ref{propModuli-rtails}).  See also \cite{HL08} for the general
proof.
\end{proof}

\section{Extensions of sections on the central fiber}

\subsection{Motivation}

\begin{say}
Consider the flat family of nodal elliptic curves  $\pi: Z \lra
B$. In this section, we look at sections in $H^0(Z_0,
\sO_{Z_0}(mS)$ over the central fiber and study their extensions
by deformation theory. This is useful because of the following.
Set theoretically, our deformation space is the union $$\bigcup_{b
\in B} H^0(Z_b, \sO_Z(mS)|_{Z_b})^{\oplus n}.$$ The deformation
space is singular at 0 if the core curve of $Z_0$ is ghost due to
the nonvanishing of the first cohomology. Naively, this is caused
by the existence of some sections  in $H^0(Z_0, \sO_{Z_0}(mS))$
that do not extend to neighboring fibers. Thus, it is instrumental
to determine, at least for some examples, exactly what sections
can be extended and what sections can not. (The singularities
occurring in the genus one case is relatively simple comparing to
the more general case. For more general deformation of sections of
invertible sheaves, one may consult the book $``$Deformations of
Algebraic Schemes$"$ by Edoardo Sernesi.)
\end{say}

\begin{say}
 We will deal exclusively
with the example of the 1-tail case as in \S \ref{1-tail}.
However, both the results and arguments work for arbitrary  flat
families of nodal elliptic curves.
\end{say}

\subsection{Sections on the central fiber}
\label{Sections-1-tail}

\begin{say}
Let $b: Z \lra \Ao \times E$ be the blowup of $\Ao \times E$ at
$(0, e_0)$, where   $e_0$ is a fixed point on $E$. Let $\pi_1: \Ao
\times E \lra {\mathbb A}^1$ be the projection to the first factor
and $\pi= \pi_1 \circ b$.
\end{say}

\begin{say}
Choose a generic point $e \in E$. Let $S, D$ be the proper
transform of $\Ao \times e_0$ and $\Ao \times e$, respectively. We
will consider the direct image  sheaf $\mathscr L_m = \pi_*
\sO_Z(mS)$.
\end{say}

\begin{say}
We write the central fiber $Z_0$ of $\pi$ as
$$Z_0 = C \cup C_a$$
where $C$ is elliptic and $C_a$ is rational.  Choose homogeneous
coordinates $[u_0,u_1]$ of $C_a$ such that $C \cap C_a =[1,0]$ and
$S \cap C_a = [0,1]$. Then the space of sections of $\sO_Z(mS)$
restricted to $C_a$ has a basis
$$u_0^m, u_0^{m-1}u_1, \cdots, u_1^m.$$
Since $\sO_Z(mS)$ restricted to $C$ is trivial,  sections on $C$
are constant, hence these constants are uniquely determined by the
values of sections on $C_a$ at the point $C \cap C_a =[1,0]$. This
shows that $\sO_Z(mS)$ restricted to $Z_0$ has $m+1$ {dimensional
global} sections {spanned} by
$$u_0^m, u_0^{m-1}u_1, \cdots,
u_0u_1^{m-1}, u_1^m.$$
\end{say}

\begin{say}
{We first observe that $u_0^m$ extends to $Z$. Indeed, the
tautological inclusion $\sO_Z\to \sO_Z(mS)$ induces a section
$\bone\in \Gamma\bl Z,\sO_Z(mS)\br$ that vanishes along $S$ of
order $m$. Thus this section restricts $Z_0$ must be the $u_0^m$
mentioned.}
\end{say}

\begin{say}
When $m=1$, we claim that  $u_1$ is the section that does not
extend to the nearby fiber. For this, note that $u_1$  does not
vanish on $S \cap C_a=[0,1]$. Consider
$$\bigcup_{t \ne 0} \sO_{Z_t} (S_t)$$ where $S_t = S \cap Z_t$,
then  a section at a nearby fiber must vanish at the point $S_t$.
As $t$ specializes to 0, the limit section on the central fiber
must vanish at the point $S \cap Z_0 = S \cap C_a = [0,1]$.
{Combined, since $u_0$ extends, $au_0+bu_1$ extends if and only if
$b=0$.}
\end{say}

\subsection{The first order extensions} 

\begin{say}
We need to investigate how sections
$$u_0^m, u_0^{m-1}u_1, \cdots, u_1^m
$$
in $H^0(Z_0,\sO_Z(mS))$ can be extended to $$H^0(kZ_0,\sO_Z(mS))
\quad  \hbox{for $k\geq 2$}.$$  To achieve this, we use Gieseker's
idea and instead look at
$$H^0(kC+(k-1)C_a,\sO_Z(mS)).$$
\end{say}

\begin{say}
We begin with studying  the sheaf $\sO_Z(mS)|_{2C+C_a}$ and its
global sections $H^0(2C+C_a,\sO_Z(mS))$. For this, we need to gain
knowledge on $H^0(2C,\sO_Z(mS))$, which will be determined by the
long exact sequence of cohomology of the exact sequence
\begin{equation}\lab{exact1}
0\lra \sO_Z(mS-C)|_C\lra \sO_Z(mS)|_{2C}\lra \sO_Z(mS)|_C\lra 0.
\end{equation}
Alternatively, this is just
$$
0 \lra I/I^2 \lra \sO_Z / I^2 \lra \sO_Z / I \lra 0
$$
where $I$ is the ideal sheaf of $C$. In particular,
$\sO_Z(mS-C)|_C = N^{\vee}_{C \backslash Z}$.  By adjunction
formula,
 $$N^{\vee}_{C \backslash Z}= K_Z |_C = \sO_Z (E)|_C = \sO_C (C \cap C_a)= \sO_C (q)$$
 where $q= C \cap C_a$. As a consequence $H^0( N^{\vee}_{C \backslash Z}) \cong
 \kk$  and $H^1( N^{\vee}_{C \backslash Z})=0$.
\end{say}

\begin{say}
From the exact sequence \eqref{exact1}, we have
\begin{equation*}
0\lra H^0( N^{\vee}_{C \backslash Z}) \lra H^0(\sO_Z(mS)|_{2C})
\lra H^0(\sO_Z(mS)|_C)\lra 0.
\end{equation*}
Via the inclusion $\sO_{2C}(-C)\sub \sO_{2C}$, $H^0( N^{\vee}_{C
\backslash Z})$ has the standard section $t$ which restricts to
zero in $\sO_{2C \cap C_a}$.
\end{say}

\begin{say}
Since $H^0(\sO_Z(mS)|_C)$ consists of constant sections,   by
lifting constant sections over $C$ to constant sections over $2C$,
we obtain a canonical split
$$H^0(\sO_Z(mS)|_{2C})= H^0(\sO_Z(mS)|_C) \oplus H^0( N^{\vee}_{C \backslash
Z}).$$  Thus, an arbitrary  element of $H^0(\sO_Z(mS)|_{2C})$ can
be (canonically) expressed as $a + b t$ for scalars $a$ and $b$.
\end{say}

\begin{say}
Then consider the exact sequence
\begin{equation}\begin{CD}\lab{exact2}
0@>>>\sO_Z(mS)|_{2C+C_a}@>>> \sO_Z(mS)|_{2C}\oplus
\sO_Z(mS)|_{C_a} \\
@>{(\phi_1,\phi_2)}>> \sO_{2C\cap C_a}@>>> 0.
\end{CD}
\end{equation}
First, we have
$$\sO_{2C\cap C_a}=\kk[u_1]/(u_1^2).
$$
We already know that
$$H^0(\sO_Z(mS)|_{C_a})=\text{span of}\ u_0^m, u_0^{m-1}u_1, \cdots,
u_1^m.
$$
Let $\phi_i$ be as indicated in the sequence \eqref{exact2}, then
sections of $$H^0(2C+C_a,\sO_Z(mS))$$ are pairs
$$(w_1,w_2)\in
H^0(\sO_Z(mS)|_{2C}) \times H^0(\sO_Z(mS)|_{C_a})
$$
such that
$$\phi_1(w_1)=\phi_2(w_2).
$$
We can write $w_1 = a + b t$ and $w_2 = \sum_i a_i u_0^{m-i}
u_i^j$,  by solving
$$\phi_1(w_1) = a  = \phi_2(w_2)= a_0 + a_1 u_1,$$ we see
that $a=a_0$ and $a_1=0$. Hence we conclude
$$H^0(2C+C_a,\sO_Z(mS))= \text{span of the extensions of }\ u_0^m, u_0^{m-2}u_2, \cdots,
u_1^m.
$$
\end{say}

\subsection{Further extensions}

We now let $W_n=(n+1)C+nC_a$, viewed as a subscheme of $Z$.

\begin{say}
In the previous subsection, we have already determined the space
$H^0(W_1,\sO_Z(mS))$. To find sections of $H^0(W_k,\sO_Z(mS))$, we
shall use induction based on the exact sequence
\begin{equation}\lab{exactk}
0\lra \sO_Z(mS-W_k)|_{Z_0}\lra \sO_Z(mS)|_{W_{k+1}}\lra
\sO_Z(mS)|_{W_k}\lra 0.
\end{equation}
Taking global sections, we obtain exact sequence
$$0\lra H^0(\sO_Z(mS-W_k)|_{Z_0})\lra H^0(\sO_Z(mS)|_{W_{k+1}})
\mapright{\alpha} H^0(\sO_Z(mS)|_{W_k}) $$ $$\lra
H^1(\sO_Z(mS-W_k)|_{Z_0})
$$
Because $W_k=kZ_0+C$ and $\sO_Z(Z_0)\cong\sO_Z$,
$$\sO_Z(mS-W_k)|_{Z_0}\cong \sO_Z(mS-C)|_{Z_0}.
$$
\end{say}

\begin{say}
{\sl Claim}: $H^1(\sO_Z(mS-W_k)|_{Z_0})=H^1(\sO_Z(mS-C)|_{Z_0}) =
0.$
\end{say}

\begin{say}
To prove the claim, we use an exact sequence similar to
\eqref{exact2}
\begin{equation*} 
\begin{CD}
0 @>>>\sO_Z(mS-C)|_{Z_0} @>>> \sO_Z(mS-C)|_{C}\oplus
\sO_Z(mS-C)|_{C_a} \\
 @>{(\phi_1,\phi_2)}>> \sO_{C\cap C_a}@>>> 0.
\end{CD}
\end{equation*}
Then taking cohomology, we have
\begin{equation*}
\begin{CD}
0 \lra H^0(\sO_Z(mS-C)|_{Z_0}) \lra H^0(\sO_Z(mS-C)|_{C}) \oplus
H^0(\sO_Z(mS-C)|_{C_a} )  \\
\mapright{(\phi_1,\phi_2)} H^0(\sO_{C\cap C_a})  \lra  H^1
(\sO_Z(mS-C)|_{Z_0}) \\
\lra H^1(\sO_Z(mS-C)|_{C}) \oplus H^1(\sO_Z(mS-C)|_{C_a}) \lra 0.
\end{CD}
\end{equation*}
$(\phi_1,\phi_2)$ is easily seen to be surjective. Also, we have
$$H^1(\sO_Z(mS-C)|_{C}) = H^1(\sO_C(q)) = 0,$$
$$H^1(\sO_Z(mS-C)|_{C_a}) = H^1(\sO_{C_a}(m-1)) = 0.$$ Thus the claim is
proved.
\end{say}

\begin{say}
From the claim, and by induction on $k$, all the sections in
$H^0(\sO_Z(mS)|_{W_k})$ ($k \ge 1$) extend to
$H^0(\sO_Z(mS)|_{W_{k+1}})$.
\end{say}


\subsection{The other sheaves}
\begin{say}
Let $D_0$ and $D_1$ be two sections of $Z/\Po$ passing through two
general points of $C$. We now look at the sheaf
$$\sM=\sO_Z(mS+D_0-D_1).
$$
\end{say}

\begin{say}
First observe that $H^0(\sO_Z(mS+D_0-D_1)|_{C_a})=
H^0(\sO_{C_a}(m))$ has the space of sections generated by $$u_0^m,
u_0^{m-1}u_1, \cdots, u_1^m.$$ Next, using the sequence
\begin{equation*}
\begin{CD}
0\lra \sO_Z(mS+D_0-D_1)|_{Z_0} \lra \\ \sO_Z(mS+D_0-D_1)|_{C}\oplus
\sO_Z(mS+D_0-D_1)|_{C_a}
 \mapright{(\phi_1,\phi_2)} \sO_{C\cap
C_a}\lra 0,
\end{CD}
\end{equation*}
and the fact that $$H^0(\sO_Z(mS+D_0-D_1)|_{C}) = H^0(\sO_C (q_0
-q_1)) =0$$ where $q_i = D_i \cap C$, $i=0,1$, we see that each
$u_0^iu_1^{m-i}$, $i>0$, extends by zero to $\sO_Z(mS+D_0-D_1)$.
Thus
$$H^0(\sO_Z(mS+D_0-D_1)|_{Z_0})= \text{span of the extensions of }\
u_0^{m-1}u_1, \cdots, u_1^m.
$$
\end{say}

\begin{say}
To proceed further, we need to determine
$$H^0(\sI_{C\sub 2C}(mS+D_0-D_1))\equiv
H^0(\sO_Z (mS+D_0-D_1)|_{2C}).
$$
For this, we look at
\begin{equation}
\begin{CD}
0\lra \sI_{C\sub 2C}(mS+D_0-D_1) \lra \sO_Z(mS +D_0-D_1)|_{2C}\\
 \lra \sO_Z(mS+D_0-D_1)|_C\lra 0.
\end{CD}
\end{equation}
Note that since $\sI_{C\sub 2C}(mS+D_0-D_1)$ is annihilated by
$\sI_{C\sub 2C}$, it is naturally a sheaf of $\sO_C$-modules. And
as such, $\sI_{C\sub 2C}(mS+D_0-D_1)\cong\sO_C(q+q_0 -q_1)$
(recall that $q= C \cap C_a$).  Taking the long exact sequence, we
then get
\begin{equation}
\begin{CD}
0\lra H^0(\sI_{C\sub 2C}(mS+D_0-D_1)) \lra H^0(\sO_Z(mS
+D_0-D_1)|_{2C}) \\
\lra H^0(\sO_Z(mS+D_0-D_1)|_C) .
\end{CD}
\end{equation}
Because $\sO_Z(mS+D_0-D_1)|_C = \sO_C (q_0-q_1)$, we conclude that
the last term is zero. Hence we obtain
$$H^0(\sO_Z(mS +D_0-D_1)|_{2C}) = H^0(\sI_{C\sub
2C}(mS+D_0-D_1))$$ $$=H^0(\sO_C(q+q_0 -q_1))=\kk
$$
where the last isomorphism holds by Riemann-Roch.
\end{say}

\begin{say}
We claim that the evaluation homomorphism
$$\phi_1: H^0(\sI_{C\sub 2C}(mS +D_0-D_1))\lra \sO_{2C\cap C_a}
$$
has image $\kk\cdot u_1\sub \kk[u_1]/(u_1^2)=\sO_{2C\cap C_a}$.
 Indeed, by
evaluating sections of $\sI_{C\sub 2C}(mS +D_0-D_1)$, we obtain an
exact sequence
$$0\lra \sI_{Z_0\cap 2C\sub 2C}(mS +D_0-D_1)\lra
\sI_{C\sub 2C}(mS +D_0-D_1)\lra \kk\cdot u_1\lra 0.
$$
Because as $\sO_C$-modules,
$$\sI_{Z_0\cap 2C\sub 2C}(mS +D_0-D_1)\cong \sO_C(q+q_0-q_1-q);
$$
thus its first cohomology group vanishes. This proves the claim.
\end{say}

\begin{say}
Now consider the sequence
\begin{equation*}
\begin{CD}
0 \lra H^0(\sO_Z(mS+D_0-D_1)|_{2C+C_a}) \lra \\
H^0(\sO_Z(mS+D_0-D_1)|_{2C}) \oplus H^0(\sO_Z(mS+D_0-D_1)|_{C_a} )
\mapright{(\phi_1,\phi_2)} H^0(\sO_{2C\cap C_a}).
\end{CD}
\end{equation*}
 By the above, we may choose a generator $s$ of
$H^0(\sO_Z(mS+D_0-D_1)|_{2C})$ so that its image under $\phi_1$ is
$u_1$. Then it is easy to see  that $\sum_{i\geq 0} a_i u_0^{m-i}
u_1^j$  extends to $H^0(\sO_Z(mS+D_0-D_1)|_{2C+C_a})$ if and only
if $a_0=0$. So,  we  find that
$$H^0(W_1, \sM)=\text{span of extensions of}\  u_0^{m-1}u_1, \cdots,
u_1^m.
$$ This is the same as $H^0(Z_0, \sM)$.
\end{say}

\begin{say}
Applying the parallel method as in the previous discussion, one
can find that
$$H^1(\sO_Z(mS+D_0-D_1-W_k)|_{Z_0})=H^1(\sO_Z(mS+D_0-D_1-C)|_{Z_0})
= 0$$ and then conclude that there are no obstructions to extend
all sections from  $H^0(W_1, \sM)$
 to $H^0(W_k,\sM)$.
\end{say}

\subsection{The trivialization of $\sO_Z(mS)$}

\begin{say}
 We fix an analytic coordinate $z$ of the elliptic curve $E$ over
$e_0\in U_0\sub E$ so $e_0=(z=0)$; we let $t$ be the standard
coordinate of $\Ao$.
\end{say}

\begin{say}
We let $E\sub Z$ be the exceptional divisor; we let $U\sub Z$ be
the analytic neighborhood $\pi_2\upmo(U_0)$ of $E\sub Z$. We let
$[u_0,u_1]$ be the homogeneous coordinate of $E$ so that over $U$:
$$u_0z=u_1t.
$$
We let $V_0=U\cap(u_1=1)$, $V_1=U\cap(u_0=1)$ and
$V_2=(C-e_0)\times \Ao$. Then we have
\begin{enumerate}
\item the transition from $V_0$ to $V_2$: $(u_0,z)\mapsto
(z,t)=(z,u_0z)$; \item the transition from $V_1$ to $V_2$:
$(u_1,t)\mapsto (z,t)=(u_1t,t)$; \item the transition from $V_0$
to $V_1$: $(u_0,z)\mapsto (u_1,t)=(1/u_0,u_0^2z)$.
\end{enumerate}
\end{say}

\begin{say}
This way, we cover $Z$ by the open subsets: $V_0$, $V_1$ and
$V_2$.
\end{say}

\begin{say}
Since $S\cap V_2=\emptyset$, $\sO_Z(mS)|_{V_2}$ has an obvious
trivialization. Also, we assume
 that $S\cap V_1=\emptyset$, hence $\sO_Z(mS)|_{V_1}$ also
an obvious trivialization. We assume that $S$  intersects  $E$ at
$([0,1],0) \in V_0$.
\end{say}

\begin{say}
We use the following local trivializations of $\sO_Z(mS)$:
$$(u_0,z, \eta_0)  \in \sO_Z(mS)|_{V_0} \cong
V_0 \times \CC,$$
$$(u_1, t, \eta_1 ) \in \sO_Z(mS)|_{V_1}
\cong V_1 \times \CC,$$
$$(z, t, \eta_2)  \in
\sO_Z(mS)|_{V_2} \cong V_2 \times \CC.$$ From $V_0$ to $V_1$, the
transition function is the same as the transition function for
$\sO_{\Po}(m)$, hence we have $$u_0^m \eta_1 = \eta_0 u_1^m.$$
Since we use 1 to trivialize both $V_1$ and $V_2$, the transition
 from $V_1$ and $V_2$ is just identity $$\eta_2 = \eta_1.$$
The two transitions patch all the local trivializations.
\end{say}

\end{document}